
\documentclass[12pt]{amsart}
\usepackage{amssymb}

\usepackage{amsmath}
\usepackage{amscd}
\usepackage{a4}
\usepackage{bbm}
\usepackage{amscd}


\setcounter{MaxMatrixCols}{10}
\newtheorem{theorem}{Theorem}

\newtheorem{corollary}[theorem]{Corollary}

\newtheorem{definition}[theorem]{Definition}

\newtheorem{notation}[theorem]{Notation}

\newtheorem{proposition}[theorem]{Proposition}

\renewenvironment{proof}[1][Proof]{\textbf{#1.} }{\ \rule{0.5em}{0.5em}}

\begin{document}
\title{Turing Degrees and Automorphism Groups of Substructure Lattices}
\author{Rumen Dimitrov}
\address{Department of Mathematics, Western Illinois University, Macomb, IL
61455, USA}
\email{rd-dimitrov@wiu.edu}
\author{Valentina Harizanov}
\address{Department of Mathematics, George Washington University,
Washington, DC 20052, USA}
\email{harizanv@gwu.edu}
\author{Andrey Morozov}
\address{Sobolev Institute of Mathematics, Novosibirsk, 630090, Russia}
\email{morozov@math.nsc.ru}
\thanks{MSC-class: 03D45 (Primary) 03C57, 08A35 (Secondary)}
\thanks{The authors acknowledge partial support of the binational research
grant DMS-1101123 from the National Science Foundation. The second author
acknowledge support from the Simons Foundation Collaboration Grant and from
CCFF and Dean's Research Chair awards of the George Washington University.}
\thanks{Preliminary version of this paper appeared in the CiE conference
proceedings \cite{DHM1}.}

\begin{abstract}
The study of automorphisms of computable and other structures connects
computability theory with classical group theory. Among the noncomputable
countable structures, computably enumerable structures are one of the most
important objects of investigation in computable model theory. In this
paper, we focus on the lattice structure of computably enumerable
substructures of a given canonical computable structure. In particular, for
a Turing degree $\mathbf{d}$, we investigate the groups of $\mathbf{d}$%
-computable automorphisms of the lattice of $\mathbf{d}$-computably
enumerable vector spaces, of the interval Boolean algebra $\mathcal{B}_{\eta
}$ of the ordered set of rationals, and of the lattice of $\mathbf{d}$%
-computably enumerable subalgebras of $\mathcal{B}_{\eta }$. For these
groups we show that Turing reducibility can be used to substitute the
group-theoretic embedding. We also prove that the Turing degree of the
isomorphism types for these groups is the second Turing jump of $\mathbf{d}$%
, $\mathbf{d^{\prime \prime }}$.
\end{abstract}

\maketitle

\section{Automorphisms of effective structures}

Computable model theory investigates algorithmic content (effectiveness) of
notions, objects, and constructions in classical mathematics. In algebra
this investigation goes back to van der Waerden who in his \emph{Modern
Algebra} introduced an explicitly given field as one the elements of which
are uniquely represented by distinguishable symbols with which we can
perform the field operations algorithmically. The formalization of an
explicitly given field led to the notion of a computable structure, one of
the main objects of study in computable model theory. A structure is \emph{%
computable} if its domain is computable and its relations and functions are
uniformly computable. Further generalization and relativization of notion of
a computable structure led to computably enumerable (abbreviated by c.e.)
structures, as well as $\mathbf{d}$-computable and $\mathbf{d}$-c.e.\
computable structures for a given Turing degree $\mathbf{d}$. In
computability theory, Turing degrees are the most important measure of
relative difficulty of undecidable problems. All decidable problems have
Turing degree $\mathbf{0.}$ There are uncountably many Turing degrees and
they are partially ordered by Turing reducibility, forming an upper
semi-lattice.

Order relations are pervasive in mathematics. One of the most important such
relations is the embedding of mathematical structures. The structures we
focus on are the automophism groups of various lattices of algebraic
structures that are substructures of a large canonical computable structure.
The study of automorphisms of computable or computably enumerable structures
connects computability theory and classical group theory. The set of all
automorphisms of a computable structure forms a group under composition. We
are interested in matching the embeddability of natural subgroups of this
group with the Turing degree ordering. It is also natural to ask questions
about the complexity of the automorphism groups and its isomorphic copies
since isomorphisms do not necessarily preserve computability-theoretic
properties.

Our computability-theoretic notation is standard and as in \cite{So, Ro, EG,
FHM}. By $\mathcal{D}$ we denote the set of all Turing degrees, and by $%
\mathbf{d}=\deg (D)$ the Turing degree of a set $D$. Hence $\mathbf{0}=\deg
(\emptyset ).$ Turing jump operator is the main tool to obtain higher Turing
degrees. For a set $D,$ the jump $D^{\prime }$ is the halting set relative
to $D.$ Turing established that $\mathbf{d}^{\prime }=\deg (D^{\prime })>%
\mathbf{d}$. All computably enumerable sets have Turing degrees $\leq
\mathbf{0}^{\prime }$. By $\mathbf{d}^{\prime \prime }$ we denote $(\mathbf{d%
}^{\prime })^{\prime },$ and so on. We recall the following definition from
computability theory.

\begin{definition}
A nonempty set of Turing degrees, $I\subseteq \mathcal{D}$, is called a
\emph{Turing ideal} if:

$(1)$\text{ }$(\forall a\in I)(\forall b)[b\leq a\Rightarrow b\in I]$, and

$(2)$ $(\forall a,b\in I)[a\vee b\in I]$.
\end{definition}

\begin{notation}
Let $I$ be a Turing ideal. Let $\mathcal{M}$ be a computable structure. Then:

$(1)$ $Aut_{I}(\mathcal{M})$ is the set of all $\mathbf{d}$-computable
automorphisms of $\mathcal{M}$ for any $\mathbf{d}\in I$;

$\left( 2\right) $ If $I=\{\mathbf{s}:\mathbf{s}\leq \mathbf{d}\}$, then $%
Aut_{I}(\mathcal{M})$ is also denoted by $Aut_{\mathbf{d}}(\mathcal{M})$.
\end{notation}

When the structure $\mathcal{M}$ is $\omega $ with equality, then its
automorphism group $Aut(\mathcal{M})$ is usually denoted by $Sym(\omega )$,
the symmetric group of $\omega .$ Hence we have
\begin{equation*}
Sym_{\mathbf{d}}(\omega )=\{f\in Sym(\omega ):\deg (f)\leq \mathbf{d}\}\text{%
.}
\end{equation*}%
(See \cite{Morozov3, FHM, Morozov1, Morozov2, Morozov4, Morozov6, GHKMR} for
previous computability-theoretic results about $Aut(\mathcal{M})$.)\smallskip

The \emph{Turing degree spectrum} of a countable structure\ $\mathcal{A}$ is
\begin{equation*}
DgSp(\mathcal{A})=\{\deg (\mathcal{B}):\mathcal{B\cong A}\},
\end{equation*}%
where $\deg (\mathcal{B})$ is the Turing degree of the atomic diagram of $%
\mathcal{B}$. Knight \cite{JK} proved that the degree spectrum of any
structure is either a singleton or is upward closed. Only the degree
spectrum of a so-called automorphically trivial structure is a singleton,
and if the language is finite, that degree must be $\mathbf{0}$ (see~\cite%
{HM}). Automorphically trivial structures include all finite structures, and
also some special infinite structures, such as the complete graph on
countably many vertices. Jockusch and Richter (see \cite{Richter})\ defined
the \emph{Turing degree of the isomorphism type} of a structure, if it
exists, to be the least degree in its Turing degree spectrum. Richter \cite%
{Richter,R81} was first to systematically study such degrees. For these and
more recent results about these degrees see \cite{FHM}. In this paper, we
are especially interested in the following result by Morozov.

\begin{theorem}
(\cite{Morozov1}) The degree of the isomorphism type of the group $Sym_{%
\mathbf{d}}(\omega )$ is $\mathbf{d}^{\prime \prime }$.
\end{theorem}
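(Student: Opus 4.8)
The plan is to establish the two inequalities separately: first that $\mathbf{d}''$ computes a copy of $Sym_{\mathbf{d}}(\omega)$, placing a degree $\leq\mathbf{d}''$ in the spectrum, and then that every copy of the group computes $\mathbf{d}''$, so that $\mathbf{d}''$ is a lower bound for the whole spectrum; together these show the least degree is exactly $\mathbf{d}''$. Fix $D$ with $\deg(D)=\mathbf{d}$ throughout.

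For the upper bound I would work with the set $P=\{e:\varphi^{D}_{e}\text{ is a permutation of }\omega\}$. Totality is $\Pi^{0}_{2}(\mathbf{d})$, injectivity is $\Pi^{0}_{1}(\mathbf{d})$, and surjectivity is $\Pi^{0}_{2}(\mathbf{d})$, so $P\leq_{T}\mathbf{d}''$; on $P$ the relation $\varphi^{D}_{e}=\varphi^{D}_{i}$ is $\Pi^{0}_{1}(\mathbf{d})$, hence $\leq_{T}\mathbf{d}'$. Using $\mathbf{d}''$ I build a transversal $T\subseteq P$ with exactly one index per permutation: put $e$ into $T$ iff $e\in P$ and $\varphi^{D}_{e}\neq\varphi^{D}_{i}$ for all $i<e$ already in $T$, both tests being $\leq_{T}\mathbf{d}''$. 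The group operation on $T$ is then defined by taking $e,i\in T$, computing via the $s$-$m$-$n$ theorem an index $j$ for $\varphi^{D}_{e}\circ\varphi^{D}_{i}$, and $\mathbf{d}''$-searching for the unique $k\in T$ with $\varphi^{D}_{k}=\varphi^{D}_{j}$; the search halts since the composite is a permutation. The resulting multiplication, inversion, and identity are $\leq_{T}\mathbf{d}''$, so the atomic diagram of this copy is $\leq_{T}\mathbf{d}''$.

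For the lower bound I must show $\mathbf{d}''\leq_{T}\deg(\mathcal{B})$ for every $\mathcal{B}\cong Sym_{\mathbf{d}}(\omega)$. I would fix a $\Pi^{0}_{2}(\mathbf{d})$-complete set, say $\mathrm{Tot}^{D}=\{e:\varphi^{D}_{e}\text{ total}\}$, noting $\mathrm{Tot}^{D}\equiv_{T}\mathbf{d}''$, and aim to decide membership in $\mathrm{Tot}^{D}$ using only the $\deg(\mathcal{B})$-computable group operations together with finitely many fixed parameters. The strategy is to recover, abstractly, enough of the action on $\omega$: one isolates the involutions and uses that two involutions are conjugate exactly when they have the same cycle type, identifies the finitary normal subgroups $\mathrm{Alt}_{fin}(\omega)\subset\mathrm{Sym}_{fin}(\omega)$ as definable normal subgroups (these are the same groups independent of $\mathbf{d}$, since finite-support permutations are always computable), and from the finitary part reconstructs a definable copy of $\omega$ on which the group acts. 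The $\mathbf{d}$-content then lives entirely in the infinite-support elements: each index $e$ can be associated with a parameter-definable configuration of elements whose action on the recovered copy of $\omega$ tracks the graph of $\varphi^{D}_{e}$, so that ``$\varphi^{D}_{e}$ is total'' is reflected by the existence of a genuine group element realizing that action together with the correctness of that action at every point.

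The hard part will be exactly this definability step: producing a uniform, parameter-definable, isomorphism-invariant interpretation under which the $\Pi^{0}_{2}(\mathbf{d})$ predicate ``for all $n$ the computation converges and the element acts correctly'' is decidable from $\deg(\mathcal{B})$ alone. The delicate point is that the coding must capture \emph{two} quantifier alternations, matching the fact that membership in $Sym_{\mathbf{d}}(\omega)$ already requires a permutation to be both total and onto; a naive coding would only read off a $\Sigma^{0}_{1}(\deg\mathcal{B})$ existence-of-an-element condition and thus recover at most $\mathbf{d}'$. Arranging the combinatorics so that both the existence of the realizing element and the universal verification of its action are simultaneously computable from $\deg(\mathcal{B})$, while remaining invariant across all copies, is the technical heart of the argument. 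Once such a coding of a $\Pi^{0}_{2}(\mathbf{d})$-complete set is in place, it combines with the upper bound to pin the least degree of the spectrum at $\mathbf{d}''$.
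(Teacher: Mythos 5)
Your upper bound is correct and essentially complete: the transversal-of-indices construction yields a copy of $Sym_{\mathbf{d}}(\omega)$ whose atomic diagram is computable in $\mathbf{d}''$, which is all that direction requires. (Note that the paper itself does not prove this theorem --- it cites it from Morozov --- but its Theorem \ref{GSL2} is precisely the engine from which the hard half follows, so that is the right benchmark for your lower bound.)

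The lower bound is where you have a genuine gap: you diagnose the difficulty correctly (a naive coding only reads off a $\Sigma^0_1(\deg\mathcal{B})$ condition), but you never produce the coding that overcomes it, and the route you sketch points the wrong way. To conclude $A\leq_T\deg(\mathcal{B})$ for a $\Pi^0_2(\mathbf{d})$-complete set $A$, you need \emph{both} $n\in A$ and $n\notin A$ to be witnessed by successful searches over the diagram of $\mathcal{B}$. Your proposed test --- ``there exists a group element realizing the action of $\varphi^D_e$ together with correctness at every point'' --- fails this requirement twice over: for a candidate element, the universal correctness check is only co-c.e.\ in $\deg(\mathcal{B})$, and when $e\notin\mathrm{Tot}^D$ the evidence is the \emph{non-existence} of any such element, again only co-c.e.; altogether you get a $\Sigma^0_2(\deg\mathcal{B})$ definition, which yields nothing. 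The missing idea, supplied by Theorem \ref{GSL2}, is the cycle-type dichotomy. Take $A$ with $n\in A\Leftrightarrow(\exists^\infty t)R(n,t)$ for $R$ computable in $\mathbf{d}$, and build (by the column construction) a $\mathbf{d}$-computable permutation $b$ and computable permutations $w,p_0,\tau,z$, with $p_n=p_0^{w^n}$, such that $b\cdot b^{p_n}$ is $1_{\inf }2_{\inf }$ if $n\in A$ and $1_{\inf }2_{\text{fin}}$ if $n\notin A$. Then $n\in A$ iff $b\cdot b^{p_n}=f^{u}$ for some $u$ in the group, where $f$ is one fixed computable $1_{\inf }2_{\inf }$ permutation (any two $\mathbf{d}$-computable $1_{\inf }2_{\inf }$ permutations are conjugate via a $\mathbf{d}$-computable permutation, so the conjugator really lies in the group), while $n\notin A$ iff $b\cdot b^{p_n}$ equals a finite product $\prod_{(i,j)\in F}\left( \tau ^{z^{i}}\right) ^{\tau ^{z^{j}}}$ over some finite set $F$ of pairs; invariance of cycle type under conjugation makes the two alternatives mutually exclusive. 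Both alternatives are existential over the atomic diagram of any copy, exactly one holds for each $n$, so running the two searches in parallel decides $A$ from $\deg(\mathcal{B})$, giving $\mathbf{d}''\leq\deg(\mathcal{B})$. Without this dichotomy (or an equivalent device making both outcomes positively witnessed), your outline stops exactly where the proof has to begin.
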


\noindent We extend this and other computability-theoretic results by
Morozov to computable algebraic structures, in particular, vector spaces and
certain Boolean algebras. Preliminary version of this paper appeared in the
CiE conference proceedings \cite{DHM1}.

In the remainder of this section, we establish some general results about
lattices of subspaces of a computable vector space and subalgebras of the
interval Boolean algebra of the ordered set of rationals. There are two main
results in the paper. In Section 2, we establish exact correspondence
between embeddability of automorphism groups of substructures and the order
relation of the corresponding Turing degrees (see Theorem \ref{Main}). In
Section $3$, we compute the Turing degrees of the isomorphism types of the
corresponding automorphism groups (see Theorem \ref{Main2}).

Let $V_{\infty }$ be a canonical computable $\aleph _{0}$-dimensional vector
space over a computable field $F,$ which has a computable basis. We can
think of a presentation of $V_{\infty }$ in which the vectors in $V_{\infty
} $ are the (codes of) finitely non-zero $\omega $-sequences of elements of $%
F$. By $\mathcal{L}$ we denote the lattice of all subspaces of $V_{\infty }.$
For a Turing degree $\mathbf{d,}$ by $\mathcal{L}_{\mathbf{d}}(V_{\infty })$
we denote the following sublattice of $\mathcal{L}$:

\begin{equation*}
\mathcal{L}_{\mathbf{d}}(V_{\infty })=\{V\in \mathcal{L}:V\text{ is }\mathbf{%
d}\text{\textbf{-}computably enumerable}\}.
\end{equation*}%
Note that in the literature $\mathcal{L}_{\mathbf{0}}(V_{\infty })$ is
usually denoted by $\mathcal{L}(V_{\infty })$. About c.e. vector spaces see %
\cite{MN, DR, DHM}. Computable vector spaces and their subspaces have been
also studied in the context of reverse mathematics (see \cite{DHKLMM}).

Guichard \cite{Gu} established that there are countably many automorphisms
of $\mathcal{L}_{\mathbf{0}}(V_{\infty})$ by showing that every computable
automorphism is generated by a $1-1$ and onto computable semilinear
transformation of $V_{\infty }$. Recall that a map $\mu :V_{\infty
}\rightarrow V_{\infty }$ is called a \emph{semilinear transformation} of $%
V_{\infty }$ if there is an automorphism $\sigma $ of $F$ such that
\begin{equation*}
\mu (\alpha u+\beta v)=\sigma (\alpha )\mu (u)+\sigma (\beta )\mu (v)
\end{equation*}%
for every $u,v\in V_{\infty }$ and every $\alpha ,\beta \in F.$

By $GSL_{\mathbf{d}}$ we denote the group of $1-1$ and onto semilinear
transformations $\left\langle \mu ,\sigma \right\rangle $ such that $deg(\mu
)\leq \mathbf{d}$ and $deg(\sigma )\leq \mathbf{d}$.\smallskip

We will also consider the structure $\mathcal{B}_{\eta }$, which is the
interval Boolean algebra over the countable dense linear order without
endpoints. Let $\mathbb{Q}$ be a fixed standard copy of the rationals and
let $\eta $ be its order type. The elements of $\mathcal{B}_{\eta }$ are the
finite unions and intersections of left-closed right-open intervals$.$ For a
Turing ideal $I$, let $\mathcal{L}_{I}\mathcal{(B}_{\eta })$ be the lattice
of all subalgebras of $\mathcal{B}_{\eta }$ which are computably enumerable
(abbreviated c.e.) in some $\mathbf{d}\in I$.

\begin{notation}
Note that $\mathcal{L}_{\mathcal{D}}\mathcal{(B}_{\eta })$ is also denoted
by $\mathcal{L}at\mathcal{(B}_{\eta }),$ where $\mathcal{D}$ is the set of
all Turing degrees.

\begin{definition}
Note that $\mathcal{L}_{\mathbf{d}}\mathcal{(B}_{\eta })$ is the lattice of
all subalgebras of $\mathcal{B}_{\eta }$ which are c.e.\ in $\mathbf{d}$.

Note that $\mathcal{L}_{\mathbf{0}}\mathcal{(B}_{\eta })$ is also denoted by
$\mathcal{L(B}_{\eta })$.
\end{definition}
\end{notation}

In \cite{Gu}, Guichard proved that every element of $Aut(\mathcal{L}_{%
\mathbf{0}}(V_{\infty }))$ is generated by an element of $GSL_{\mathbf{0}}$.
This result can be relativized to an arbitrary Turing degree $\mathbf{d}$.

\begin{theorem}
(\cite{Gu}) Every $\Phi \in Aut(\mathcal{L}_{\mathbf{d}}(V_{\infty }))$ is
generated by some $\left\langle \mu ,\sigma \right\rangle \in $ $GSL_{%
\mathbf{d}}.$ Moreover, if $\Phi $ is also generated by some other $%
\left\langle \mu _{1},\sigma _{1}\right\rangle \in $ $GSL_{\mathbf{d}}$,
then there is $\gamma \in F$ such that
\begin{equation*}
\left( \forall v\in V_{\infty }\right) [\mu (v)=\gamma \mu _{1}(v)].
\end{equation*}
\end{theorem}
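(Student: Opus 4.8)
The plan is to relativize Guichard's argument from \cite{Gu} by carrying the oracle $\mathbf{d}$ through each step, splitting the proof into an existence part (producing a generating semilinear transformation and showing it can be taken in $GSL_{\mathbf{d}}$) and a uniqueness part (the scalar relationship).

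First, for existence, I would recover the projective geometry of $V_{\infty}$ inside the lattice. The atoms of $\mathcal{L}_{\mathbf{d}}(V_{\infty})$ are exactly the one-dimensional subspaces, and every finite-dimensional subspace is computable, hence lies in $\mathcal{L}_{\mathbf{d}}(V_{\infty})$ for every $\mathbf{d}$; moreover dimension equals height in the lattice, and collinearity of points is expressible purely order-theoretically, since distinct $\langle u\rangle,\langle v\rangle,\langle w\rangle$ are collinear iff $\langle w\rangle\leq\langle u\rangle\vee\langle v\rangle$. As $\Phi$ preserves order, joins, and meets, it restricts to a collineation of the projective space $P(V_{\infty})$. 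Because $\dim V_{\infty}=\aleph_{0}\geq 3$, the Fundamental Theorem of Projective Geometry yields a bijective semilinear transformation $\langle\mu,\sigma\rangle$ inducing $\Phi$ on all points. I would then upgrade this to all of $\mathcal{L}_{\mathbf{d}}(V_{\infty})$: every $V\in\mathcal{L}_{\mathbf{d}}(V_{\infty})$ is the join in $\mathcal{L}_{\mathbf{d}}(V_{\infty})$ of the atoms below it, and since order-isomorphisms preserve least upper bounds, $\Phi(V)=\bigvee\{\Phi(\langle v\rangle):\langle v\rangle\leq V\}=\bigvee\{\langle\mu(v)\rangle:\langle v\rangle\leq V\}=\mu(V)$. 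Thus $\langle\mu,\sigma\rangle$ generates $\Phi$.

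Second comes the effective heart of the theorem: showing $\langle\mu,\sigma\rangle$ can be chosen in $GSL_{\mathbf{d}}$, i.e.\ with $\deg(\mu),\deg(\sigma)\leq\mathbf{d}$. Fix the computable basis $\{e_{i}\}$ of $V_{\infty}$. Normalize by taking $\mu(e_{0})$ to be a $\mathbf{d}$-computably located generator of the image line $\Phi(\langle e_{0}\rangle)$. For each $i$, I would pin down $\mu(e_{i})$ as the unique generator of $\Phi(\langle e_{i}\rangle)$ for which $\mu(e_{0})+\mu(e_{i})$ lies on $\Phi(\langle e_{0}+e_{i}\rangle)$, and read off $\sigma(\alpha)$ from the image of the computable line $\langle e_{0}+\alpha e_{1}\rangle$ relative to the data already fixed on $\langle e_{0}\rangle$ and $\langle e_{1}\rangle$. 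The point is that all subspaces fed into $\Phi$ here are computable members of $\mathcal{L}_{\mathbf{d}}(V_{\infty})$, and the hypothesis that $\Phi$ maps $\mathcal{L}_{\mathbf{d}}(V_{\infty})$ onto itself forces their images into $\mathcal{L}_{\mathbf{d}}(V_{\infty})$, hence to be $\mathbf{d}$-c.e.; locating a generator inside a $\mathbf{d}$-c.e.\ one-dimensional space and solving the resulting finite linear conditions are then $\mathbf{d}$-computable searches.

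The main obstacle, and the place where Guichard's analysis must genuinely be relativized rather than merely reindexed, is the \emph{uniformity} in the previous step: to conclude that $\mu$ is $\mathbf{d}$-computable as a function, rather than merely that each value is individually computable from some $\mathbf{d}$-c.e.\ code, I must obtain $\mathbf{d}$-c.e.\ indices for $\Phi(\langle e_{i}\rangle)$ and $\Phi(\langle e_{0}+e_{i}\rangle)$ \emph{uniformly} in $i$, which is exactly where one exploits that $\Phi$ and $\Phi^{-1}$ both preserve $\mathbf{d}$-c.e.-ness together with the standard $\mathbf{d}$-effective enumeration of $\mathcal{L}_{\mathbf{d}}(V_{\infty})$. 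Finally, the uniqueness clause needs no effectivity and is the classical scalar-rigidity argument: if $\langle\mu,\sigma\rangle$ and $\langle\mu_{1},\sigma_{1}\rangle$ both generate $\Phi$, then for each nonzero $v$ there is $\gamma_{v}\in F^{\times}$ with $\mu(v)=\gamma_{v}\mu_{1}(v)$; comparing the two expansions of $\mu(u+v)$ for linearly independent $u,v$ and using that $\mu_{1}(u),\mu_{1}(v)$ are independent forces $\gamma_{u}=\gamma_{u+v}=\gamma_{v}$, and since in infinite dimension any two nonzero vectors are linked through a common independent partner, $\gamma_{v}$ is a single constant $\gamma$, giving $\mu=\gamma\mu_{1}$ as required.
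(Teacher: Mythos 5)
Your existence step (the action of $\Phi$ on atoms is a collineation of $P(V_{\infty})$, apply the Fundamental Theorem of Projective Geometry, then propagate to the whole lattice via joins of atoms) and your uniqueness step are both sound. But note that the paper contains no proof of this theorem at all: it is quoted from Guichard with the remark that his argument relativizes, and the only place the method is displayed is the paper's proof of the proposition on $Aut(\mathcal{L}_{I}(V_{\infty}))$, where the five subspaces $V_{1},\ldots,V_{5}$ appear. Measured against that method, your outline has a genuine gap at precisely the point you flag as the ``main obstacle.'' You assert that $\mathbf{d}$-c.e.\ indices for $\Phi(\langle e_{i}\rangle)$ and $\Phi(\langle e_{0}+e_{i}\rangle)$ can be obtained uniformly in $i$ by ``exploiting that $\Phi$ and $\Phi^{-1}$ preserve $\mathbf{d}$-c.e.-ness together with the standard enumeration of $\mathcal{L}_{\mathbf{d}}(V_{\infty})$.'' This does not work: an automorphism of the lattice is an abstract, purely extensional map; the fact that it sends $\mathbf{d}$-c.e.\ subspaces to $\mathbf{d}$-c.e.\ subspaces induces no map on indices whatsoever, let alone a $\mathbf{d}$-computable one. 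If such uniformity came for free, the theorem would be nearly trivial. As written, your construction extracts each value $\mu(e_{i})$ from a separate, non-uniform piece of data (an index for $\Phi(\langle e_{i}\rangle)$), so the $\mathbf{d}$-computability of $\mu$ as a function is never established --- and that effectivity claim is the entire content of the theorem beyond the classical projective-geometry facts.

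The missing idea is Guichard's encoding trick: concentrate all the information that must be transferred through $\Phi$ into \emph{finitely many} lattice elements, whose images may be fixed non-uniformly, and then recover uniformity from the fact that $\Phi$ commutes with meet and join, which \emph{are} uniformly effective operations on $\mathbf{d}$-c.e.\ indices (intersection and sum of $\mathbf{d}$-c.e.\ subspaces are $\mathbf{d}$-c.e., uniformly). Concretely, one fixes $\mathbf{d}$-c.e.\ indices for the images $W_{i}=\Phi(V_{i})$ of the five computable spaces $V_{1}=span(\{v_{0},v_{2},\ldots\})$, $V_{2}=span(\{v_{1},v_{3},\ldots\})$, $V_{3}=span(\{v_{0}+v_{1},v_{2}+v_{3},\ldots\})$, $V_{4}=span(\{v_{1}+v_{2},v_{3}+v_{4},\ldots\})$, $V_{5}=span(\{v_{0}+\alpha_{0}v_{1},v_{2}+\alpha_{1}v_{3},\ldots\})$, plus an index for the single atom $\Phi(\langle v_{0}\rangle)$ --- still finite data. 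Individual lines are then generated by lattice polynomials in these spaces, uniformly in $i$: for instance $V_{3}\cap(\langle v_{0}\rangle\vee V_{2})=\langle v_{0}+v_{1}\rangle$, $V_{2}\cap(\langle v_{0}+v_{1}\rangle\vee V_{1})=\langle v_{1}\rangle$, $V_{4}\cap(\langle v_{1}\rangle\vee V_{1})=\langle v_{1}+v_{2}\rangle$, and so on through the whole basis, while $V_{5}\cap(\langle v_{2i}\rangle\vee\langle v_{2i+1}\rangle)=\langle v_{2i}+\alpha_{i}v_{2i+1}\rangle$ calibrates the field automorphism. Applying the same polynomials to $W_{1},\ldots,W_{5}$ yields $\mathbf{d}$-c.e.\ indices for the images of all these lines uniformly in $i$; only then do your normalization searches (which are otherwise correct) make $\mu$ and $\sigma$ genuinely $\mathbf{d}$-computable. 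Without this encoding step your proposal reconstructs the classical, non-effective part of the theorem but not its computability-theoretic core.
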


It follows from this result that every $\Phi \in Aut(\mathcal{L}%
_{I}(V_{\infty }))$ is generated by some $\left\langle \mu ,\sigma
\right\rangle \in $ $GSL_{I}.$

Every automorphism of $\mathcal{L}_{I}(V_{\infty })$ is defined on the
one-dimensional subspaces of $V_{\infty }$ and can be uniquely extended to
an automorphism of the entire lattice $\mathcal{L}$. Hence, we can identify
the automorphisms of $\mathcal{L}_{I}(V_{\infty })$ with their extensions to
automorphism of $\mathcal{L}$. We will prove that every automorphism of $%
\mathcal{L}$ is generated by a $\mathbf{d}$-computable semilinear
transformation and its restriction to $\mathcal{L}_{\mathbf{d}}(V_{\infty })$
is an automorphism of $\mathcal{L}_{\mathbf{d}}(V_{\infty }).$

\begin{proposition}
(a) $Aut(\mathcal{L}_{I}(V_{\infty }))=\bigcup\limits_{\mathbf{d}\in I}Aut(%
\mathcal{L}_{\mathbf{d}}(V_{\infty }))$

(b) $Aut(\mathcal{L})=\bigcup\limits_{\mathbf{d}\in \mathcal{D}}Aut(\mathcal{%
L}_{\mathbf{d}}(V_{\infty }))$
\end{proposition}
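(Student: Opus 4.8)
The plan is to prove (a) by two inclusions, viewing every lattice automorphism through its unique extension to $\mathcal{L}$ as the preceding discussion arranges, and then to obtain (b) as the special case $I=\mathcal{D}$ once one observes that $\mathcal{L}_{\mathcal{D}}(V_{\infty})=\mathcal{L}$. The single fact doing the real work is the following \emph{image lemma}: if $\mu$ is $\mathbf{d}$-computable and the subspace $V$ is c.e.\ in $\mathbf{e}$, then $\mu(V)=\{\mu(v):v\in V\}$ is c.e.\ in $\mathbf{d}\vee\mathbf{e}$, since one may enumerate $V$ with an oracle for $\mathbf{e}$ and apply $\mu$ with an oracle for $\mathbf{d}$. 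Moreover $\mu^{-1}$ is again $\mathbf{d}$-computable, because one computes $\mu^{-1}(y)$ by searching for the (unique, since $\mu$ is onto) $x$ with $\mu(x)=y$; the same holds for $\sigma^{-1}$ on the computable field $F$, so $\langle\mu^{-1},\sigma^{-1}\rangle\in GSL_{\mathbf{d}}$.

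For the inclusion $\bigcup_{\mathbf{d}\in I}Aut(\mathcal{L}_{\mathbf{d}}(V_{\infty}))\subseteq Aut(\mathcal{L}_{I}(V_{\infty}))$ I would take $\Phi\in Aut(\mathcal{L}_{\mathbf{d}}(V_{\infty}))$ with $\mathbf{d}\in I$. By the relativized Guichard theorem $\Phi$ is generated by some $\langle\mu,\sigma\rangle\in GSL_{\mathbf{d}}$, so its extension to $\mathcal{L}$ acts by $V\mapsto\mu(V)$. Given $V\in\mathcal{L}_{I}(V_{\infty})$, say $V$ is c.e.\ in $\mathbf{e}\in I$, the image lemma gives that $\mu(V)$ is c.e.\ in $\mathbf{d}\vee\mathbf{e}$, which lies in $I$ because $I$ is a Turing ideal; hence $\mu(V)\in\mathcal{L}_{I}(V_{\infty})$. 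The identical argument applied to $\mu^{-1}$ shows that $V\mapsto\mu(V)$ maps $\mathcal{L}_{I}(V_{\infty})$ onto itself bijectively, and it preserves meets and joins because it already does so on all of $\mathcal{L}$. Thus $\Phi\in Aut(\mathcal{L}_{I}(V_{\infty}))$. Conversely, for $\Phi\in Aut(\mathcal{L}_{I}(V_{\infty}))$ I would invoke the remark following the relativized Guichard theorem to get $\langle\mu,\sigma\rangle\in GSL_{I}$ generating $\Phi$; since $I$ is closed under join, the single degree $\mathbf{d}=\deg(\mu)\vee\deg(\sigma)$ lies in $I$ and $\langle\mu,\sigma\rangle\in GSL_{\mathbf{d}}$. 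By Guichard's theorem this pair generates an automorphism of $\mathcal{L}_{\mathbf{d}}(V_{\infty})$ whose extension to $\mathcal{L}$ is again $V\mapsto\mu(V)$, hence equals $\Phi$ under the identification; so $\Phi\in Aut(\mathcal{L}_{\mathbf{d}}(V_{\infty}))$, completing (a).

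For (b) I would note that $\mathcal{D}$ is itself a Turing ideal and that every subspace $V\in\mathcal{L}$ is c.e.\ in its own degree, so $\mathcal{L}_{\mathcal{D}}(V_{\infty})=\mathcal{L}$; applying (a) with $I=\mathcal{D}$ then delivers the claim. The only point requiring genuine (if brief) justification beyond degree bookkeeping is that the class of $I$-c.e.\ subspaces is carried bijectively onto itself by a $\mathbf{d}$-computable semilinear bijection, and this is exactly where the closure of the Turing ideal under $\vee$ is indispensable; without it the restricted map need not stay inside $\mathcal{L}_{I}(V_{\infty})$. A secondary point to flag is that the reverse inclusion of (b) relies, through the cited consequence of Guichard's theorem at $I=\mathcal{D}$, on every automorphism of the full lattice $\mathcal{L}$ being generated by a semilinear transformation, i.e.\ on the fundamental theorem of projective geometry; granting that together with the relativized Guichard theorem, the rest is routine.
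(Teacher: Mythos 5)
Your forward inclusion and your treatment of (b) are fine: the image lemma plus join-closure of $I$ correctly shows that the extension of a $\mathbf{d}$-computable automorphism permutes $\mathcal{L}_{I}(V_{\infty })$, and the observation that $\mathcal{D}$ is an ideal with $\mathcal{L}_{\mathcal{D}}(V_{\infty })=\mathcal{L}$ is exactly why the paper can say that (b) follows immediately from (a).

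The gap is in the converse direction of (a). You obtain the generating pair by invoking ``the remark following the relativized Guichard theorem,'' i.e.\ the sentence asserting that every $\Phi \in Aut(\mathcal{L}_{I}(V_{\infty }))$ is generated by some $\left\langle \mu ,\sigma \right\rangle \in GSL_{I}$. But that remark is not a consequence of the \emph{statement} of the relativized theorem, and in the paper it carries no independent proof: its justification \emph{is} the proof of this very proposition. The relativized Guichard theorem speaks only about automorphisms of $\mathcal{L}_{\mathbf{d}}(V_{\infty })$, and an arbitrary $\Phi \in Aut(\mathcal{L}_{I}(V_{\infty }))$ does not a priori restrict to an automorphism of any single $\mathcal{L}_{\mathbf{d}}(V_{\infty })$: nothing yet rules out that $\Phi $ sends some $\mathbf{d}$-c.e.\ subspaces to subspaces that are only c.e.\ in strictly larger degrees of $I$, with no one degree working uniformly. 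So citing the theorem or the remark to produce $\left\langle \mu ,\sigma \right\rangle $ assumes precisely what must be shown; this is where all the mathematical content sits. The paper closes this gap by going into Guichard's \emph{proof} rather than its statement: it fixes the five computable subspaces $V_{1},\ldots ,V_{5}$ (spans of the even basis vectors, the odd basis vectors, the sums $v_{2i}+v_{2i+1}$, the sums $v_{2i+1}+v_{2i+2}$, and the combinations $v_{2i}+\alpha _{i}v_{2i+1}$), notes that their images $W_{i}=\Phi (V_{i})$ lie in $\mathcal{L}_{I}(V_{\infty })$, and then uses the ideal property on these \emph{finitely many} spaces to find a single $\mathbf{d}\in I$ with all $W_{i}\in \mathcal{L}_{\mathbf{d}}(V_{\infty })$. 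Guichard's construction recovers from the $W_{i}$ a $\mathbf{d}$-computable semilinear transformation inducing an automorphism $\Psi $ of $\mathcal{L}_{\mathbf{d}}(V_{\infty })$ with $\Psi =\Phi \upharpoonright \mathcal{L}_{\mathbf{d}}(V_{\infty })$, and $\Phi $ is then the unique extension of $\Psi $. Your closing appeal to the fundamental theorem of projective geometry points at the right phenomenon (a semilinear map must be produced), but it gives no computability bound; the whole point of Guichard's method is that the complexity of the semilinear map is controlled by the complexity of the images of finitely many canonical computable subspaces. Replacing your citation of the remark by this five-subspace argument repairs the proof.
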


\begin{proof}
Part (b) follows immediately from (a).

To prove (a) note first that $\bigcup\limits_{\mathbf{d}\in I}Aut(\mathcal{L}%
_{\mathbf{d}}(V_{\infty }))\subseteq Aut(\mathcal{L}_{I}(V_{\infty }))$ by
the discussion above.

Now, suppose $\Phi \in Aut(\mathcal{L}_{I}(V_{\infty }))$. Let $\alpha
_{0},\alpha _{1},\alpha _{2},...$ be a fixed computable enumeration of the
elements of the field $F.$ Assume that $v_{0},v_{1},v_{2},...$ is a
computable enumeration of a computable basis of $V_{\infty }.$ Following\
Guichard's idea in \cite{Gu}, we define the following computable subspaces
of $V_{\infty }$:$\smallskip $

$V_{1}=span(\{v_{0},v_{2},v_{4},...\}),\smallskip $

$V_{2}=span(\{v_{1},v_{3},v_{5},...\}),\smallskip $

$V_{3}=span(\{v_{0}+v_{1},v_{2}+v_{3},v_{4}+v_{5},...\}),\smallskip $

$V_{4}=span(\{v_{1}+v_{2},v_{3}+v_{4},v_{5}+v_{6},...\}),\smallskip $

$V_{5}=span(\{v_{0}+\alpha _{0}v_{1},v_{2}+\alpha _{1}v_{3},v_{4}+\alpha
_{2}v_{5},...\}).\smallskip $

\noindent Suppose that $\Phi (V_{i})=W_{i}\in \mathcal{L}_{I}(V_{\infty })$
for $i=1,\ldots ,5$ and note that there are finitely many spaces involved,
there is a Turing degree $\mathbf{d}\in I$ such that $W_{i}\in \mathcal{L}_{%
\mathbf{d}}(V_{\infty })$. Using Guichard's method, we can prove that there
is a $\mathbf{d}$-computable semilinear transformation that induces an
automorphism $\Psi $ of $\mathcal{L}_{\mathbf{d}}(V_{\infty })$ and is such
that $\Psi =\Phi |_{\mathcal{L}_{\mathbf{d}}(V_{\infty })}$. Hence $\Phi $
as an automorphism of $\mathcal{L}_{I}(V_{\infty })$ is the unique extension
of $\Psi $.
\end{proof}

We can also prove that every automorphism of $Aut(\mathcal{L}_{\mathbf{d}}(%
\mathcal{B}_{\eta }))$ \ is generated by a $\mathbf{d}$-computable
automorphism of $\mathcal{B}_{\eta },$ and that every $\Phi \in Aut(\mathcal{%
L}_{I}(\mathcal{B}_{\eta }))$ is generated by a $\mathbf{d}$-computable
automorphism of $\mathcal{B}_{\eta }$ for some $\mathbf{d}\in I.$

\begin{proposition}
\label{Aut_BA_LBA}$Aut(\mathcal{L}_{I}(\mathcal{B}_{\eta }))\cong Aut_{I}(%
\mathcal{B}_{\eta })$
\end{proposition}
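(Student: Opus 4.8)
The plan is to exhibit the isomorphism explicitly. I would define a map
$\Theta \colon Aut_{I}(\mathcal{B}_{\eta }) \to Aut(\mathcal{L}_{I}(\mathcal{B}_{\eta }))$
by sending a $\mathbf{d}$-computable automorphism $f$ of $\mathcal{B}_{\eta }$ (with $\mathbf{d}\in I$) to its induced action on subalgebras, $\Theta(f)(A)=f[A]:=\{f(a):a\in A\}$. The first step is to check that $\Theta$ is well defined. If $A$ is c.e.\ in some $\mathbf{e}\in I$ and $f$ is $\mathbf{d}$-computable, then $f[A]$ is again a subalgebra and it is c.e.\ in $\mathbf{d}\vee \mathbf{e}$, which lies in $I$ because $I$ is a Turing ideal; hence $f[A]\in \mathcal{L}_{I}(\mathcal{B}_{\eta })$. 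Since $f$ is a bijection of $\mathcal{B}_{\eta }$ preserving the Boolean operations, $\Theta(f)$ preserves inclusion and therefore the lattice operations (meet is intersection, join is the generated subalgebra), and $\Theta(f^{-1})$ is its two-sided inverse, so $\Theta(f)$ is a lattice automorphism. The identities $\Theta(f\circ g)=\Theta(f)\circ \Theta(g)$ and $\Theta(\mathrm{id})=\mathrm{id}$ are immediate, so $\Theta$ is a group homomorphism.

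Surjectivity of $\Theta$ is essentially granted by the discussion preceding the statement: every $\Phi \in Aut(\mathcal{L}_{I}(\mathcal{B}_{\eta }))$ is generated by some $\mathbf{d}$-computable automorphism $f$ of $\mathcal{B}_{\eta }$ with $\mathbf{d}\in I$, which says precisely that $\Phi=\Theta(f)$ with $f\in Aut_{I}(\mathcal{B}_{\eta })$. Thus it remains only to prove injectivity.

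For injectivity I would use the homomorphism property to reduce to a single kernel computation. Suppose $\Theta(f)=\Theta(g)$, and set $h=g^{-1}\circ f\in Aut_{I}(\mathcal{B}_{\eta })$. Then $\Theta(h)=\Theta(g)^{-1}\circ \Theta(f)=\mathrm{id}$, so $h$ fixes every subalgebra of $\mathcal{B}_{\eta }$ setwise; it suffices to show $h=\mathrm{id}$. For each $x\in \mathcal{B}_{\eta }$, the map $h$ fixes the four-element subalgebra $\{0,x,\overline{x},1\}$, and since $h$ is a Boolean-algebra automorphism fixing $0$ and $1$, this forces $h(x)\in \{x,\overline{x}\}$ for every $x$. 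The crux is to rule out $h(x)=\overline{x}$ for $x\notin\{0,1\}$, and here I would invoke that $\mathcal{B}_{\eta }$ is atomless: choose $y$ with $0<y<x$. Then likewise $h(y)\in \{y,\overline{y}\}$, and applying $h$ to the relation $y\wedge \overline{x}=0$ (valid since $y\le x$) gives $h(y)\wedge x=0$, using $h(\overline{x})=\overline{h(x)}=x$. The case $h(y)=y$ yields $y=y\wedge x=0$, and the case $h(y)=\overline{y}$ yields $x\le y$ against $y<x$; both are contradictions. Hence $h(x)=x$ for all $x$, so $h=\mathrm{id}$ and $f=g$.

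The main obstacle is exactly this last step. The lattice of subalgebras only records the unordered pairs $\{x,\overline{x}\}$, so a priori a lattice automorphism could correspond to independently "flipping" some elements to their complements; ruling this out — and thereby pinning down the generating automorphism uniquely — is precisely where the atomlessness of the interval algebra $\mathcal{B}_{\eta }$ is needed, playing the role that the scalar indeterminacy played in the semilinear (vector space) case.
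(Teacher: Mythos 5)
Your proposal is correct and takes essentially the same route as the paper: both use the natural homomorphism sending an automorphism of $\mathcal{B}_{\eta}$ to its induced action on subalgebras, both take surjectivity from the preceding discussion (that every automorphism of $\mathcal{L}_{I}(\mathcal{B}_{\eta})$ is generated by some $\mathbf{d}$-computable automorphism of $\mathcal{B}_{\eta}$ with $\mathbf{d}\in I$), and both reduce the isomorphism to showing the kernel is trivial. The only difference is in the details of that kernel computation: the paper exhibits a single moved four-element subalgebra $\{0,a,\overline{a},1\}$ obtained from a disjointness witness (the step it labels ``easy to show''), whereas you argue in the contrapositive direction, showing that any kernel element must fix every $x$ by ruling out the flip $h(x)=\overline{x}$ via atomlessness --- a fully worked-out variant of the same idea.
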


\begin{proof}
Suppose $\varphi :Aut_{I}(\mathcal{B}_{\eta })\rightarrow Aut(\mathcal{L}%
_{I}(\mathcal{B}_{\eta }))$ is an onto homomorphism that takes every member $%
\sigma \in Aut_{I}(\mathcal{B}_{\eta })$ to an automorphism $\varphi (\sigma
)$ of $\mathcal{L}_{I}(\mathcal{B}_{\eta })$ induced by $\sigma $. We will
show that $Ker(\varphi )=id.$ Assume that $\sigma $ is a nontrivial element
of $Ker(\varphi )$. It is easy to show that there exists an $a\neq 0$ such
that $\sigma (a)\neq \overline{a}$ and $\sigma (a)\cap a=0$. Then,
considering the image of the Boolean algebra $\left\{ 0,a,\overline{a}%
,1\right\} $ generated by $a,$ we obtain
\begin{equation*}
\varphi (\sigma )(\left\{ 0,a,\overline{a},1\right\} )=\left\{ \sigma
(0\},\sigma (a),\sigma (\overline{a}),\sigma (1)\right\} =\left\{ 0,\sigma
(a),\sigma (\overline{a}),1\right\} \neq \left\{ 0,a,\overline{a},1\right\} .
\end{equation*}

Hence $\varphi (\sigma )\neq id$. Therefore, $Ker(\varphi )$ is trivial and $%
\varphi $ is an isomorphism.
\end{proof}

\section{Turing reducibility and group embeddings for vector spaces and
Boolean algebras}

Morozov \cite{Morozov2} showed that the correspondence $\mathbf{a}%
\rightarrow Sym_{\mathbf{a}}(\omega )$ can be used to substitute Turing
reducibility with group-theoretic embedding. More precisely, he established
that
\begin{equation*}
Sym_{\mathbf{a}}(\omega )\hookrightarrow Sym_{\mathbf{b}}(\omega
)\Leftrightarrow \mathbf{a}\leq \mathbf{b}
\end{equation*}

\noindent for every pair $\mathbf{a}$, $\mathbf{b}$ of Turing degrees. It
follows from this result that
\begin{equation*}
Sym_{\mathbf{a}}(\omega )\cong Sym_{\mathbf{b}}(\omega )\Leftrightarrow
\mathbf{a}=\mathbf{b}.
\end{equation*}
Here, we establish analogous results for vector spaces and Boolean algebras.
In the proof of the next, main, theorem we will use the standard notation: $%
\left[ x,y\right] =x^{-1}y^{-1}xy$ and $x^{y}=y^{-1}xy.$

\bigskip

\begin{theorem}
\label{Main} For any pair of Turing ideals $I,J$ we have:

(a) $Aut(\mathcal{L}_{I}(V_{\infty }))\hookrightarrow Aut(\mathcal{L}%
_{J}(V_{\infty }))\Leftrightarrow I\subseteq J$

(b) $Aut_{I}(\mathcal{B}_{\eta })\hookrightarrow Aut_{J}(\mathcal{B}_{\eta
})\Leftrightarrow I\subseteq J$

(c) $Aut(\mathcal{L}_{I}(\mathcal{B}_{\eta }))\hookrightarrow Aut(\mathcal{L}%
_{J}(\mathcal{B}_{\eta }))\Leftrightarrow I\subseteq J$
\end{theorem}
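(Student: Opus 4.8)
The plan is to prove each of the three biconditionals by establishing the two directions separately. The backward direction $I\subseteq J\Rightarrow$ embedding is the routine half: if $I\subseteq J$, then every $\mathbf{d}$-computable automorphism (for $\mathbf{d}\in I$) is also a $\mathbf{d}$-computable automorphism for a degree in $J$, so by Proposition on $Aut(\mathcal{L}_I(V_\infty))=\bigcup_{\mathbf{d}\in I}Aut(\mathcal{L}_{\mathbf{d}}(V_\infty))$ and its Boolean-algebra analogue, the group on the left is literally a subgroup of the group on the right, giving the inclusion embedding for all three parts. So the content is entirely in the forward direction: an abstract group embedding forces $I\subseteq J$.

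For the forward direction I would follow Morozov's strategy for $Sym_{\mathbf{a}}(\omega)$ and adapt it to these lattice-automorphism groups. The key is to show that if there is an embedding $\Theta$ of the left group into the right, then for each $\mathbf{a}\in I$ one can recover, inside $J$, enough information to compute a set of degree $\mathbf{a}$; since $J$ is a Turing ideal (downward closed and closed under join), this forces $\mathbf{a}\in J$, hence $I\subseteq J$. Concretely, I would first reduce parts (a) and (c) to statements about the concrete groups: by the Proposition, $Aut(\mathcal{L}_I(V_\infty))$ is generated via $GSL_I$, and by Proposition~\ref{Aut_BA_LBA}, $Aut(\mathcal{L}_I(\mathcal{B}_\eta))\cong Aut_I(\mathcal{B}_\eta)$, so all three cases become questions about groups of $\mathbf{d}$-computable symmetry-type transformations indexed by the ideal. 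Then I would fix a degree $\mathbf{a}\in I$ and a set $A$ of that degree, and build inside the left group an element $g_A$ whose group-theoretic relationship to a fixed computable ``coordinate frame'' encodes $A$.

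The technical heart, which I expect to be the main obstacle, is to find a first-order (group-theoretic) definable invariant that pins down the Turing degree of a transformation in a way preserved by the abstract embedding $\Theta$. The standard device is to use commutators and conjugacy: one identifies, using formulas built from $[x,y]=x^{-1}y^{-1}xy$ and $x^y=y^{-1}xy$, a definable family of ``small support'' or ``point-like'' elements (analogues of transpositions or of involutions with prescribed fixed-point behavior) and shows that the pattern in which a coding element $g_A$ interacts with this definable family recovers $A$ up to Turing equivalence. The delicate point for vector spaces is that $GSL_{\mathbf{d}}$ carries the field automorphism $\sigma$ as well as $\mu$, so the coding must be arranged on the one-dimensional subspaces (where automorphisms act faithfully) so that the field-automorphism component does not obstruct the degree computation; for $\mathcal{B}_\eta$ the analogous care is needed because automorphisms are order-automorphisms of $\mathbb{Q}$ lifted to the interval algebra. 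Once such a definable coding is in place, applying $\Theta$ transports the whole configuration into the right group, where decoding it is a $\mathbf{b}$-computable procedure for some $\mathbf{b}\in J$; this yields $A\leq_T B$ for some $B$ with $\deg(B)\in J$, and downward closure of $J$ gives $\mathbf{a}\in J$. I would present the vector-space case (a) in full detail and then indicate that (b) follows by the same commutator-coding argument adapted to $Aut_I(\mathcal{B}_\eta)$, with (c) reduced to (b) through Proposition~\ref{Aut_BA_LBA}.
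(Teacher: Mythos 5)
Your overall architecture matches the paper's: the backward direction is the routine subgroup inclusion, and the forward direction is a Morozov-style coding argument --- fix $\mathbf{d}\in I$ and a set $A$ of degree $\mathbf{d}$, code $A$ into the interaction between a computable ``frame'' and a $\mathbf{d}$-computable coding element, push through the embedding, and decode on the $J$-side using the ideal properties of $J$. But the step you yourself call the technical heart is formulated in a way that would fail. You ask for ``a first-order (group-theoretic) definable invariant \ldots preserved by the abstract embedding $\Theta$.'' An abstract embedding is neither surjective nor elementary, so it preserves only quantifier-free configurations --- equations between group words and their negations --- and emphatically does \emph{not} preserve first-order definable families of ``transposition-like'' or ``small-support'' elements (that device belongs to interpretability/bi-interpretability arguments, not to embedding arguments). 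The paper's coding is purely equational for exactly this reason: it builds permutations $g_{0},g_{1},w,b\in Sym_{\mathbf{d}}(\omega)$ with $n\notin A\Leftrightarrow\bigl([g_{0},b^{w^{n}}]=1\wedge[g_{1},b^{w^{n}}]=1\bigr)$, embeds $Sym_{\mathbf{d}}(\omega)$ into $GSL_{\mathbf{d}}/\!\sim$ by permuting a computable basis, and notes that an injective homomorphism preserves and reflects such identities. No definable subset of either group is ever needed; the decoding instead exploits the concrete nature of the target group: its elements are semilinear maps computable in degrees in $J$, and by the ideal property a single $\mathbf{c}\in J$ computes the finitely many images involved.

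A second concrete gap: you assert the decoding ``yields $A\leq_{T}B$.'' In the target group, checking that a word in $\mathbf{c}$-computable semilinear maps is $\sim$-equivalent to the identity is a $\Pi_{1}^{\mathbf{c}}$ condition (scalar-multiple agreement on every basis vector), so what the decoding actually gives is that $A$ --- whose membership is witnessed by some commutator being $\nsim 1$ --- is merely c.e.\ in $\mathbf{c}$, not computable in $\mathbf{c}$. The paper closes this by proving the claim for every set c.e.\ in $\mathbf{d}$ and then applying it to both $A$ and $\overline{A}$ where $\deg(A)=\mathbf{d}$: both become c.e.\ in a degree of $J$, hence (using the join) $A\leq_{T}\mathbf{c}$ for some $\mathbf{c}\in J$, and downward closure gives $\mathbf{d}\in J$. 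Finally, on part (b) you propose to redo the commutator coding inside $Aut_{I}(\mathcal{B}_{\eta})$; the paper avoids this entirely by the sandwich $Sym_{I}(\omega)\hookrightarrow_{H}Aut_{I}(\mathcal{B}_{\eta})\hookrightarrow Aut_{J}(\mathcal{B}_{\eta})\hookrightarrow Sym_{J}(\omega)$ (an automorphism of $\mathcal{B}_{\eta}$ is in particular a permutation of its countable domain, of the same degree) and then quotes Morozov's theorem for ideals. Your direct route for (b) could be made to work, but it costs a second round of coding and needs the same two repairs as (a).
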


\begin{proof}
We will prove (a) and (b) only. Statement (c) follows easily from (b) and
Proposition \ref{Aut_BA_LBA}.

(a) Assume $I\subseteq J.$ Then it is straightforward to show that $Aut(%
\mathcal{L}_{I}(V_{\infty }))\hookrightarrow Aut(\mathcal{L}_{J}(V_{\infty
}))$ .

Now, assume that $Aut(\mathcal{L}_{I}(V_{\infty }))\hookrightarrow Aut(%
\mathcal{L}_{J}(V_{\infty })).$ We will prove that $I\subseteq J\mathbf{.}$

Let $\mathbf{d}\in I$. As usual, let $\{e_{0},e_{1\text{ }},\ldots \}$ be a
fixed computable basis of $V_{\infty }$. For $\left\langle \mu _{i},\sigma
_{i}\right\rangle \in GSL_{\mathbf{d}}$, we define $\left\langle \mu
_{1},\sigma _{1}\right\rangle \thicksim \left\langle \mu _{2},\sigma
_{2}\right\rangle $ iff:

(1)\ $\sigma _{1}=\sigma _{2},$ and

(2)\ there is $\alpha \in F$ such that $\alpha \neq 0$ and $\left( \forall
v\in V_{\infty }\right) \left[ \mu _{1}(v)=\alpha \mu _{2}(v)\right] $.

Note that $Aut(\mathcal{L}_{\mathbf{d}}(V_{\infty }))\cong GSL_{\mathbf{d}%
}/\thicksim .$ We can define a group embedding $\delta :Sym_{\mathbf{d}%
}(\omega )\hookrightarrow GSL_{\mathbf{d}}/\thicksim $ as follows. For any $%
f\in Sym_{\mathbf{d}}(\omega )$, we let $\delta (f)$ be the $\thicksim $
-equivalence class of a linear transformation $\langle \widetilde{f}%
,id\rangle $ such that
\begin{equation}
\widetilde{f}(e_{i\text{ }})=e_{f(i)}.
\end{equation}%
Note that if $\delta (f_{1})=\delta (f_{2})$, then $\widetilde{f_{1}}=c%
\widetilde{f_{2}}$ for some $c\in F$, and thus
\begin{equation*}
\left( \forall i\in \omega \right) [e_{f_{1}(i)}=\widetilde{f_{1}}(e_{i})=c%
\widetilde{f_{2}}(e_{i\text{ }})=ce_{f_{2}(i)}].
\end{equation*}%
Since the vectors $e_{i}$, $i\in \omega $, are independent, we must have
\begin{equation*}
\left( \forall i\in \omega \right) [f_{1}(i)=f_{2}(i)].
\end{equation*}%
Therefore, there exists a map
\begin{equation*}
K:Sym_{\mathbf{d}}(\omega )\hookrightarrow GSL_{J}/\thicksim
\end{equation*}%
such that if $f\in Sym_{\mathbf{d}}(\omega ),$ then $K(f)$ is a $\mathbf{b}$%
-computable (for some $\mathbf{b}\in J)$ linear transformation of $V_{\infty
}$ modulo scalar multiplication.

We claim that if a set $A$ is c.e.\ in $\mathbf{d},$ then $A$ is c.e.\ in
some $\mathbf{c}\in J.$ Fix $A\subseteq \omega $ such that $A$ is c.e.\ in $%
\mathbf{d}$, and let $h:\omega \rightarrow \omega $ be a $\mathbf{d}$%
-computable enumeration of $A$, that is, $rng(h)=A.$ Fix a partition of the
natural numbers into uniformly computable infinite sets $R_{i}$ for $i\in
\mathbb{Z}$ with enumerations $R_{i}=\{c_{i}^{0}<c_{i}^{1}<\cdots \}.$ Let
the permutations $g_{0},g_{1},w,b\in Sym_{\mathbf{d}}(\omega )$ be defined
as follows:

$w(c_{i}^{j})=c_{i+1}^{j}$ for each $i\in Z$ and $j\in \omega $,\smallskip

$g_{0}=\prod\limits_{j\in \omega }(c_{0}^{2j},c_{0}^{2j+1})$,\smallskip

$g_{1}=\prod\limits_{j\in \omega }(c_{0}^{2j+1},c_{0}^{2j+2})$, and\smallskip

$b=\prod\limits_{n,t\in \omega \text{ }\wedge \text{ }%
h(t)=n}(c_{n}^{t},c_{n}^{t+1})$.\smallskip

We will also use the following abbreviation: $w^{n}=\underset{n\text{ times}}%
{\underbrace{w\cdots w}\text{.}}$ Then we have
\begin{equation*}
n\notin A\Leftrightarrow \left( \lbrack g_{0},b^{w^{n}}]=1\wedge \lbrack
g_{1},b^{w^{n}}]=1\right) \text{.}
\end{equation*}%
This is because $g_{0}$ and $b^{w^{n}}$ commute iff $n$ is not enumerated
into $A$ at an odd stage $t$, and, similarly, $g_{1}$ and $b^{w^{n}}$
commute iff $n$ is not enumerated into $A$ at an even stage $t.$ Let $%
\widetilde{g_{0}}=K(g_{0})$, $\widetilde{g_{1}}=K(g_{1})$, $\widetilde{w}%
=K(w)$, and $\widetilde{b}=K(b).$ Each $\widetilde{g_{0}}$, $\widetilde{g_{1}%
},$ $\widetilde{w}$ and $\widetilde{b}$ is computable in some (possibly
different degrees in $J$)$.$ Since $J$ is an ideal there is a fixed $\mathbf{%
c}\in J$ that computes all of them. Then

\begin{equation*}
n\notin A\Leftrightarrow \left( \lbrack K(g_{0}),K(b)^{\left( K(w)\right)
^{n}}]=1\wedge \lbrack K(g_{1}),K(b)^{\left( K(w)\right) ^{n}}]=1\right)
\end{equation*}%
\begin{equation*}
\Leftrightarrow \left( \lbrack \widetilde{g_{0}},\widetilde{b}^{\widetilde{w}%
^{n}}]_{/\thicksim }=1\wedge \lbrack \widetilde{g}_{1},\widetilde{b}^{%
\widetilde{w}^{n}}]_{/\thicksim }=1\right) \text{.}
\end{equation*}

We will now show that $[\widetilde{g_{0}},\widetilde{b}^{\widetilde{w}%
^{n}}]\nsim 1$ is c.e.\ relative to the fixed $\mathbf{c}\in J$. Let $\tau
_{n}~=_{def}~[\widetilde{g_{0}},\widetilde{b}^{\widetilde{w}^{n}}]$. Then

$\tau _{n}\nsim 1\Leftrightarrow \tau _{n}(e_{0})$ and $e_{0}$ are linearly
independent, or

$\qquad \qquad \ \ \left( \exists m\in \omega \right) \left( \exists \alpha
\neq 0\right) [\tau _{n}(e_{0})=\alpha e_{0}\wedge \tau _{n}(e_{m})\neq
\alpha e_{m}].$

\noindent Let $A$ have Turing degree $\mathbf{d}$. Then $A$ and $\overline{A}
$ are both c.e.\ in $\mathbf{d}$, and, therefore, $A$ is computable in $%
\mathbf{c}$. Hence $\mathbf{d}\leq \mathbf{c}$ and so $\mathbf{d\in }J$%
.\smallskip

(b) We will now prove that $Aut_{I}(\mathcal{B}_{\eta })\hookrightarrow
Aut_{J}(\mathcal{B}_{\eta })\Leftrightarrow I\subseteq J\mathbf{.}$

The proof is a corollary of the fact that
\begin{equation*}
Sym_{I}(\omega )\hookrightarrow Sym_{J}(\omega )\Leftrightarrow I\subseteq J%
\mathbf{.}
\end{equation*}

We first define an embedding
\begin{equation*}
H:Sym(\omega )\hookrightarrow Aut(B_{\eta }).
\end{equation*}

For any $p\in Sym(\omega ),$ define $\widetilde{p}:\mathbb{Q\rightarrow Q}$
as follows:%
\begin{equation*}
\widetilde{p}(x)=\left\{
\begin{array}{cc}
x & \text{if }x\ \text{is negative,} \\
p([x])+\{x\} & \text{if }x\text{ is non-negative},%
\end{array}%
\right.
\end{equation*}

where $[x]$, and $\{x\}$ are the integer and fractional parts of $x,$
respectively$.$

Then for any $p\in Sym(\omega )$, let $H(p)$ be an automorphism of $B_{\eta
} $ defined as follows. If $I\in $ $B_{\eta },$ then
\begin{equation*}
H(p)(I)=\widetilde{p}(I).
\end{equation*}

It is important to note that $H(p)$ is uniformly computable in $p.$

Now, suppose $Aut_{I}(\mathcal{B}_{\eta })\hookrightarrow Aut_{J}(\mathcal{B}%
_{\eta })$. Then
\begin{equation*}
Sym_{I}(\omega )\hookrightarrow _{H}Aut_{I}(\mathcal{B}_{\eta
})\hookrightarrow Aut_{J}(\mathcal{B}_{\eta })\hookrightarrow
_{ID}Sym_{J}(\omega ),\text{ }
\end{equation*}%
and so
\begin{equation*}
Sym_{I}(\omega )\hookrightarrow Sym_{J}(\omega ).
\end{equation*}

Thus, by Morozov's result, $I\subseteq J.$
\end{proof}

\begin{corollary}
For any pair of Turing degrees $\mathbf{a},\mathbf{b}$ we have%
\begin{eqnarray*}
Aut(\mathcal{L}_{\mathbf{a}}(V_{\infty })) &\hookrightarrow &Aut(\mathcal{L}%
_{\mathbf{b}}(V_{\infty }))\Leftrightarrow \mathbf{a}\leq \mathbf{b}; \\
Aut_{\mathbf{a}}(\mathcal{B}_{\eta }) &\hookrightarrow &Aut_{\mathbf{b}}(%
\mathcal{B}_{\eta })\Leftrightarrow \mathbf{a}\leq \mathbf{b.}
\end{eqnarray*}
\end{corollary}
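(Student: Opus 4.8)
The plan is to derive the corollary from Theorem \ref{Main} by specializing the two Turing ideals to the principal ideals determined by single degrees. Given degrees $\mathbf{a}$ and $\mathbf{b}$, I would set $I=\{\mathbf{s}:\mathbf{s}\leq \mathbf{a}\}$ and $J=\{\mathbf{s}:\mathbf{s}\leq \mathbf{b}\}$. First I would check that $I$ and $J$ are genuine Turing ideals: each is nonempty and downward closed by construction, and each is closed under join since $\mathbf{s}_{1},\mathbf{s}_{2}\leq \mathbf{a}$ forces $\mathbf{s}_{1}\vee \mathbf{s}_{2}\leq \mathbf{a}$, and similarly for $\mathbf{b}$. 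Hence Theorem \ref{Main} applies with these particular $I$ and $J$.

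Next I would reconcile the two notations for the objects involved. For the Boolean algebra part this is immediate: by the stated convention, $Aut_{\mathbf{a}}(\mathcal{B}_{\eta })$ is by definition $Aut_{I}(\mathcal{B}_{\eta })$ for $I=\{\mathbf{s}:\mathbf{s}\leq \mathbf{a}\}$, and likewise $Aut_{\mathbf{b}}(\mathcal{B}_{\eta })$ is $Aut_{J}(\mathcal{B}_{\eta })$. For the vector space part I would verify that $\mathcal{L}_{I}(V_{\infty })=\mathcal{L}_{\mathbf{a}}(V_{\infty })$ as lattices. A subspace lying in $\mathcal{L}_{I}(V_{\infty })$ is c.e.\ in some $\mathbf{d}\leq \mathbf{a}$, and enumerability relative to $\mathbf{d}$ implies enumerability relative to the larger degree $\mathbf{a}$, so the subspace is $\mathbf{a}$-c.e.; conversely every $\mathbf{a}$-c.e.\ subspace is c.e.\ in $\mathbf{a}\in I$. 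The two lattices therefore coincide, and consequently so do their automorphism groups.

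Finally I would match containment of ideals with the ordering of degrees. If $\mathbf{a}\leq \mathbf{b}$, then every $\mathbf{s}\leq \mathbf{a}$ satisfies $\mathbf{s}\leq \mathbf{b}$, giving $I\subseteq J$; conversely, if $I\subseteq J$, then $\mathbf{a}\in I\subseteq J$ yields $\mathbf{a}\leq \mathbf{b}$. Thus $I\subseteq J\Leftrightarrow \mathbf{a}\leq \mathbf{b}$, and substituting into parts (a) and (b) of Theorem \ref{Main} gives exactly the two displayed equivalences.

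Since every step is a routine specialization, I expect no serious obstacle; the only point demanding genuine attention is the identification $\mathcal{L}_{I}(V_{\infty })=\mathcal{L}_{\mathbf{a}}(V_{\infty })$, which rests on the fact that a principal ideal is downward closed with top element $\mathbf{a}$ together with the monotonicity of relative enumerability. Everything else is bookkeeping about the definitions of $Aut_{\mathbf{a}}$ and $\mathcal{L}_{\mathbf{a}}$.
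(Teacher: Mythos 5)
Your proposal is correct and matches the paper's (implicit) derivation: the corollary is stated without proof precisely because it is the specialization of Theorem \ref{Main} to the principal ideals $I=\{\mathbf{s}:\mathbf{s}\leq \mathbf{a}\}$ and $J=\{\mathbf{s}:\mathbf{s}\leq \mathbf{b}\}$, exactly as you carry out. Your verifications — that principal ideals are Turing ideals, that $\mathcal{L}_{I}(V_{\infty })=\mathcal{L}_{\mathbf{a}}(V_{\infty })$ and $Aut_{I}(\mathcal{B}_{\eta })=Aut_{\mathbf{a}}(\mathcal{B}_{\eta })$ under the paper's notational conventions, and that $I\subseteq J\Leftrightarrow \mathbf{a}\leq \mathbf{b}$ — are precisely the bookkeeping needed.
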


\section{Turing degrees of the isomorphism types of automorphism groups}

In this section, we will determine the Turing degree spectra of both $GSL_{%
\mathbf{d}}$ and $Aut_{\mathbf{d}}(B_{\eta })$. More precisely, we will show
that each of them is the upper cone with the least element $\mathbf{d}%
^{\prime \prime }.$ For the statement of the main theorem we use terminology
and notation from the following definition.

\begin{definition}
A permutation $p$ on a set $M$ will be called:

$(i)$ $1_{\inf }2_{\inf }$ on $M$ if it is a product of infinitely many $1$%
-cycles and infinitely many $2$-cycles;

$(ii)$ $1_{\inf }2_{\text{fin}}$ on $M$ if it is a product of infinitely
many $1$-cycles and finitely many $2$-cycles.
\end{definition}

The main results about the degree spectra of $GSL_{\mathbf{0}}$ and $Aut_{%
\mathbf{0}}(B_{\eta })$ will use the following embeddability theorem, which
is interesting on its own.

\begin{theorem}
\label{GSL2}Let $G$ be an $X$-computable group, and let $H:Sym_{\mathbf{0}%
}(\omega )\hookrightarrow G$ be an embedding (of any complexity). Suppose
that for every $1_{\inf }2_{\inf }$ permutation $p\in Sym_{\mathbf{0}%
}(\omega )$, the image $H(p)$ is not a conjugate of the image of any $%
1_{\inf }2_{\text{fin}}$ permutation in $Sym_{\mathbf{0}}(\omega )$.

Then $\mathbf{0}^{\prime \prime }\leq \deg (X).$
\end{theorem}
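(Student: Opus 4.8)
The plan is to show that the hypothesis on $H$ lets us decide, using $X$ as an oracle, membership in a set of permutations whose recognition is known to require $\mathbf{0}''$. The key observation is that $H$ embeds $Sym_{\mathbf{0}}(\omega)$ into the $X$-computable group $G$, so conjugacy in the image $H(Sym_{\mathbf{0}}(\omega))$ is detectable relative to $X$: for fixed $p,q \in Sym_{\mathbf{0}}(\omega)$, the relation ``$H(p)$ is conjugate to $H(q)$ in $G$'' is $\Sigma^0_1(X)$, since it amounts to searching for a witness $g \in G$ with $g^{-1}H(p)g = H(q)$, and all the group operations and equality tests in $G$ are $X$-computable. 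The hypothesis guarantees that $H$ separates the $1_{\inf}2_{\inf}$ permutations from the conjugacy-closure of the $1_{\inf}2_{\text{fin}}$ permutations; I would exploit exactly this separation.

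First I would recall the purely group-theoretic fact that in $Sym(\omega)$ two permutations are conjugate if and only if they have the same cycle type. Thus, within $Sym_{\mathbf{0}}(\omega)$, every $1_{\inf}2_{\text{fin}}$ permutation is conjugate (by a computable permutation, since both have computable cycle structure) to a fixed canonical representative, and likewise every $1_{\inf}2_{\inf}$ permutation is conjugate to a canonical $1_{\inf}2_{\inf}$ representative. The point of the hypothesis is that $H$ does not collapse these two conjugacy behaviors: an image $H(p)$ of a $1_{\inf}2_{\inf}$ permutation is never $G$-conjugate to the image of any $1_{\inf}2_{\text{fin}}$ permutation. So if I am handed a permutation $p \in Sym_{\mathbf{0}}(\omega)$ that is promised to be either $1_{\inf}2_{\inf}$ or $1_{\inf}2_{\text{fin}}$, I can determine which by asking, relative to $X$, whether $H(p)$ is $G$-conjugate to the image of the canonical $1_{\inf}2_{\text{fin}}$ representative. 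This conjugacy question is $\Sigma^0_1(X)$, but because on the promised inputs exactly one of ``conjugate to the $2_{\text{fin}}$ representative'' and ``conjugate to the $2_{\inf}$ representative'' holds, and both are $\Sigma^0_1(X)$ by the argument above, the distinguishing predicate is in fact $X$-decidable on the promise.

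Next I would pin down the complexity of deciding, from a computable index for a permutation promised to be of one of these two types, which type it is, and argue this already requires $\mathbf{0}''$. The distinction between ``infinitely many $2$-cycles'' and ``finitely many $2$-cycles'' is a $\Pi^0_2$ versus $\Sigma^0_2$ condition on the uniformly computable permutation, and by a standard index-set computation this separating problem is $\Pi^0_2$-complete, hence $\mathbf{0}''$ can be reduced to it. I would set this up concretely: using the uniformity of the construction, build a computable sequence of permutations $p_k \in Sym_{\mathbf{0}}(\omega)$, uniformly in $k$, so that $p_k$ is $1_{\inf}2_{\text{fin}}$ if $k \in \emptyset''$ and $1_{\inf}2_{\inf}$ otherwise (this is where one encodes a $\Pi^0_2$-complete set, e.g.\ $\{k : W_k \text{ is finite}\}$, into the number of $2$-cycles). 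Since $H$ and the group operations of $G$ are available relative to $X$, the decision procedure above computes $\emptyset''$ from $X$, giving $\mathbf{0}'' \le \deg(X)$.

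The main obstacle is making the reduction fully uniform and effective relative to $X$: I must produce, uniformly in $k$, both the permutation $p_k$ and an $X$-index for its image $H(p_k)$ in $G$, and I must fix in advance $X$-indices for the images of the two canonical representatives so that the conjugacy searches can run. Here I would lean on two given facts from the setup — that $G$ is $X$-computable (so membership, multiplication, inversion, and equality are $X$-computable, making the conjugacy search a genuine $\Sigma^0_1(X)$ search) and that the encoding $k \mapsto p_k$ is computable, so that $k \mapsto H(p_k)$ is computable relative to $X$. The subtlety is that $H$ is allowed to be of arbitrary complexity, so I cannot assume $H$ itself is $X$-computable on all of $Sym_{\mathbf{0}}(\omega)$; I only need $H$ evaluated at the specific, computably-presented permutations $p_k$ and the two fixed representatives, and I would arrange the encoding so that these finitely-generated or explicitly-described images are obtainable relative to $X$ via the conjugacy relation alone, rather than via a wholesale $X$-computation of $H$. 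Verifying that the conjugacy predicate really does separate the two cases on every promised input — i.e.\ that the hypothesis is used in exactly the right place and that no ambiguity arises from $H(p_k)$ being conjugate to neither or both representatives — is the crux I would check most carefully.
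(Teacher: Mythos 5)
Your overall plan---code a $\Sigma ^{0}_{2}$-complete set into the number of $2$-cycles of computable permutations and then decide membership by running parallel $\Sigma ^{0}_{1}(X)$ searches in $G$---is the same strategy as the paper's, but the proposal has a fatal gap on the ``finitely many $2$-cycles'' side. Your claim that every $1_{\inf }2_{\text{fin}}$ permutation is conjugate to a fixed canonical representative is false: conjugacy in $Sym(\omega )$ preserves cycle type, so permutations with different finite numbers of $2$-cycles lie in \emph{different} conjugacy classes; the $1_{\inf }2_{\text{fin}}$ permutations form infinitely many conjugacy classes, not one. Moreover a uniformly computable coding cannot pin down the exact number of $2$-cycles in the finite case (if it could, the coded set would be a Boolean combination of $\Sigma ^{0}_{1}$ conditions on $k$, hence $\Delta ^{0}_{2}$, contradicting $\Sigma ^{0}_{2}$-completeness). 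Consequently your ``finite-side'' predicate ``$H(p_{k})$ is $G$-conjugate to the image of the canonical $2_{\text{fin}}$ representative'' need not hold when $p_{k}$ is $1_{\inf }2_{\text{fin}}$ --- for instance $G$ could be exactly $H(Sym_{\mathbf{0}}(\omega ))$, where conjugacy of images can be as fine as conjugacy of permutations --- so your parallel search may diverge. A single representative exists only on the $1_{\inf }2_{\inf }$ side (there the fixed-point set and the $2$-cycle set of a computable permutation are computable and infinite, so a computable conjugator can be built), and that is the only side on which the paper uses conjugacy. For the $1_{\inf }2_{\text{fin}}$ side the paper uses a mechanism your proposal is missing entirely: a $1_{\inf }2_{\text{fin}}$ permutation is a \emph{finite product of transpositions}, and each transposition $(n,m)$ equals $\left( \tau ^{z^{n_{1}}}\right) ^{\tau ^{z^{m_{1}}}}$ for the fixed transposition $\tau =(0,1)$ and a fixed computable permutation $z$; applying the homomorphism $H$ turns this into an equation in $G$ among fixed constants, and searching over finite products of this form is again $\Sigma ^{0}_{1}(X)$.

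The second gap is the uniformity problem you yourself flag as ``the crux'': since $H$ may have arbitrary complexity, a uniformly computable sequence $p_{k}$ gives you no access whatsoever to the group elements $H(p_{k})$, and your proposed remedy (images ``obtainable relative to $X$ via the conjugacy relation alone'') is not an argument. The paper's solution is to make every coding permutation a \emph{group word in finitely many fixed computable permutations}: it constructs $b$, $p_{0}$, $w$ once and for all, codes $n\in A$ into $b\cdot b^{p_{n}}$ with $p_{n}=w^{-n}p_{0}w^{n}$, and then
\begin{equation*}
H(b\cdot b^{p_{n}})=H(b)\cdot \bigl( H(w)^{-n}H(p_{0})H(w)^{n}\bigr)
^{-1}H(b)\bigl( H(w)^{-n}H(p_{0})H(w)^{n}\bigr)
\end{equation*}
is an $X$-computable word, uniformly in $n$, in the finitely many fixed constants $H(b),H(p_{0}),H(w)\in G$ (similarly $H(\tau )$, $H(z)$, $H(f)$ are fixed constants). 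Without this device your reduction cannot even formulate its conjugacy questions, so both the finite-side detection and the uniform access to images need to be rebuilt along the paper's lines.
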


\begin{proof}
Let $A$ be a $\Pi _{2}^{0}$-complete set and let $R(x,t)$ be a computable
predicate such that%
\begin{equation*}
n\in A\Leftrightarrow \left( \exists ^{\infty }t\right) R(n,t).
\end{equation*}%
We will prove that $A\leq _{T}X.$ Fix a partition of $\omega $ into
uniformly computable infinite sets $S_{i,j}$ for $i\in \mathbb{Z}$ and $j\in
\{1,2\}$ with enumerations $S_{i,j}=\{c_{i,j}^{0}<c_{i,j}^{1}<\cdots \}.$
The sets $S_{i,1}$ and $S_{i,2}$ will be referred to as the left and the
right parts of the $i$-th column $S_{i}=S_{i,1}\cup S_{i,2}.$ This reference
will be useful in the definitions of certain maps below. We can graphically
present this partition as follows:

\begin{equation*}
\cdots
\begin{tabular}{ccc}
\frame{%
\begin{tabular}{c}
. \\
. \\
$c_{-1,1}^{2}$ \\
$c_{-1,1}^{1}$ \\
$c_{-1,1}^{0}$%
\end{tabular}%
}\frame{%
\begin{tabular}{c}
. \\
. \\
$c_{-1,2}^{2}$ \\
$c_{-1,2}^{1}$ \\
$c_{-1,2}^{0}$%
\end{tabular}%
} & \frame{%
\begin{tabular}{c}
. \\
. \\
$c_{0,1}^{2}$ \\
$c_{0,1}^{1}$ \\
$c_{0,1}^{0}$%
\end{tabular}%
}\frame{%
\begin{tabular}{c}
. \\
. \\
$c_{0,2}^{2}$ \\
$c_{0,2}^{1}$ \\
$c_{0,2}^{0}$%
\end{tabular}%
} & \frame{%
\begin{tabular}{c}
. \\
. \\
$c_{1,1}^{2}$ \\
$c_{1,1}^{1}$ \\
$c_{1,1}^{0}$%
\end{tabular}%
}\frame{%
\begin{tabular}{c}
. \\
. \\
$c_{1,2}^{2}$ \\
$c_{1,2}^{1}$ \\
$c_{1,2}^{0}$%
\end{tabular}%
} \\
$\underbrace{\frame{%
\begin{tabular}{c}
$S_{-1,1}$%
\end{tabular}%
}\frame{%
\begin{tabular}{c}
$S_{-1,2}$%
\end{tabular}%
}}$ & $\underbrace{\frame{%
\begin{tabular}{c}
$S_{0,1}$%
\end{tabular}%
}\frame{%
\begin{tabular}{c}
$S_{0,2}$%
\end{tabular}%
}}$ & $\underbrace{\frame{%
\begin{tabular}{c}
$S_{1,1}$%
\end{tabular}%
}\frame{%
\begin{tabular}{c}
$S_{1,2}$%
\end{tabular}%
}}$ \\
$\text{column }S_{-1}$ & $\text{column }S_{0}$ & $\text{column }S_{1}$%
\end{tabular}%
\cdots
\end{equation*}

We will now define the following maps.\smallskip

\noindent (i)\ $w(c_{i+1,j}^{k})=_{def}c_{i,j}^{k}$ for each $i\in \mathbb{Z}%
,$ $k\in \omega $ and $j=1,2.$\smallskip

Clearly, the map $w$ is such that $w(S_{i+1,1})=S_{i,1}$ and $%
w(S_{i+1,2})=S_{i,2}.$ It maps the left (right) part of the $\left(
i+1\right) $-st column to the left (right) part of the $i$-th column for
each $i$.\smallskip

\noindent (ii)\ $p_{0}=_{def}\prod\limits_{k\in \omega
}(c_{0,1}^{k},c_{0,2}^{k})$

It is easy to see that the map $p_{0}$ switches the left and right parts of
the $0$-th column (i.e., $p_{0}(S_{0,1})=S_{0,2}$ and $%
p_{0}(S_{0,2})=S_{0,1} $), and is identity on all other elements of $\omega
. $ \smallskip

\noindent (iii)\ $p_{n}=_{def}p_{0}^{w^{n}}=w^{-n}p_{0}w^{n}$\smallskip

Note that the map $p_{n}$ switches the left and right parts of the $n$-th
column (i.e., $p_{n}(S_{n,1})=S_{n,2}$ and $p_{n}(S_{n,2})=S_{n,1}$), and is
identity on all other elements of $\omega $. \smallskip

\noindent (iv)\ $z(k)=_{def}\left\{
\begin{array}{ccc}
0 & \text{if} & k=0, \\
1 & \text{if} & k=2, \\
k-2 & \text{if} & k=2t\geq 4, \\
k+2 & \text{if} & k=2t+1.%
\end{array}%
\right. $

Note that the map $z$ is a permutation of $\omega $, which contains only one
infinite cycle and $\left( 0\right) $. \smallskip

\noindent (v)\ $\tau =_{def}(0,1)$\smallskip

For $k\in \mathbb{Z}$, we have
\begin{equation*}
\tau ^{z^{k}}=\left\{
\begin{array}{cc}
(0,2k)\text{ } & \text{if }k\geq 1, \\
(0,2\left| k\right| +1) & \text{if }k\leq 0,%
\end{array}%
\right.
\end{equation*}%
so%
\begin{equation}
\left( \forall n,m\in \omega \right) \left( \exists n_{1},m_{1}\in \mathbb{Z}%
\right) [\left( \tau ^{z^{n_{1}}}\right) ^{\tau ^{z^{m_{1}}}}=(n,m)].
\label{*}
\end{equation}

\noindent Note that property (\ref{*}) guarantees that any $1_{\inf }2_{%
\text{fin}}$ permutation on $\omega $ can be represented as a finite product
of the permutations $\tau $ and $z.$ \smallskip

\noindent (vi)\ We will now construct a permutation $b$ on $\omega $ with
the following properties:

$\qquad b\upharpoonright _{S_{n,1}}=id\upharpoonright _{S_{n,1}}$

$\qquad b\upharpoonright _{S_{n,2}}$ is $\left\{
\begin{array}{cc}
1_{\inf }2_{\inf }\text{ on }S_{n,2}\text{ } & \text{if }n\in A, \\
1_{\inf }2_{\text{fin}}\text{ on }S_{n,2} & \text{if }n\notin A\text{.}%
\end{array}%
\right. $

We will define $b$ in stages. At each stage $s$ we will have $%
E^{s}=_{def}dom(b^{s})=rng(b^{s}).$ \smallskip

\noindent \emph{Construction}\smallskip

\noindent \textit{Stage} $0$.

Let $b^{0}\upharpoonright S_{i}=_{def}id$ for $i\leq -1,$ and $%
E^{0}=\bigcup\limits_{i\leq -1}S_{i}.$

\noindent \textit{Stage} $s+1=\left\langle n,t\right\rangle $.\smallskip

Case 1. If $R(n,t)$, then find the least elements $p,q,r\in S_{n,2}$ such
that $p,q,r\notin E^{s}$. Let $b^{s+1}=b^{s}\cdot (p,q)$ and assume that $%
b^{s+1}(r)=r.$ Thus, we have $E^{s+1}=E^{s}\cup \left\{ p,q,r\right\} $ and $%
b^{s+1}\upharpoonright E^{s}=b^{s}$.\smallskip

Case 2. If $\lnot R(n,t)$, then find the least elements $p,q,r\in S_{n,2}$
such that $p,q,r\notin E^{s}$. Let $b^{s+1}\upharpoonright E^{s}=b^{s}$ and $%
b^{s+1}(p)=p,$ $b^{s+1}(q)=q,$ $b^{s+1}(r)=r.$ Then $E^{s+1}=E^{s}\cup
\left\{ p,q,r\right\} $.\smallskip

\emph{End of construction.}\smallskip

By construction, $dom(b)=rng(b)=\omega .$

It follows that\ if $n\in A,$ then $\left( \exists ^{\infty }t\right)
R(n,t), $ so Case 1 applies infinitely often for this $n$, and hence the map
$b$ switches infinitely many pairs in the right part of the $n$-th column.
Therefore, $b\upharpoonright S_{n,2}$ is $1_{\inf }2_{\inf }$ and $%
b\upharpoonright S_{n,1}=id$.

If $n\notin A,$ then $\left( \exists ^{<\infty }t\right) R(n,t)$, so Case 1
applies finitely often for this $n$, and hence the map $b$ switches only
finitely many pairs in the right part of the $n$-th column. Therefore, $%
b\upharpoonright S_{n,2}$ is $1_{\inf }2_{\text{fin}}$ and $b\upharpoonright
S_{n,1}=id$.

In both cases, the map $b^{p_{n}}$ reverses the action of $b$ on the left
part and the right part of the $n$-th column $S_{n}$, while for $k\neq n$,
we have $b^{p_{n}}\upharpoonright S_{k}=b\upharpoonright S_{k}$.\smallskip

Then $b\cdot b^{p_{n}}$ is $\left\{
\begin{tabular}{ll}
$1_{\inf }2_{\inf }$ on $S_{n}$ & if $n\in A,$ \\
$1_{\inf }2_{\text{fin}}$ on $S_{n}$ & if $n\notin A,$ \\
$id$ on $S_{k}$ & if $n\neq k.$%
\end{tabular}%
\right. $

Therefore, $b\cdot b^{p_{n}}$ is $\left\{
\begin{tabular}{ll}
$1_{\inf }2_{\inf }$ on $\omega $ & if $n\in A,$ \\
$1_{\inf }2_{\text{fin}}$ on $\omega $ & if $n\notin A.$%
\end{tabular}%
\right. $\smallskip

Finally, note that on $\omega ,$ every computable $1_{\inf }2_{\inf }$
permutation is the conjugate of a fixed computable $1_{\inf }2_{\inf }$
permutation and some other computable permutation. Therefore, assume that $f$
is a fixed computable $1_{\inf }2_{\inf }$ permutation such that for every $%
1_{\inf }2_{\inf }$ permutation $q\in Sym_{\mathbf{0}}(\omega )$:
\begin{equation*}
\left( \exists h\in Sym_{\mathbf{0}}(\omega )\right) [q=f^{h}].
\end{equation*}

Hence for every $n$, we have%
\begin{equation}
\begin{tabular}{lll}
$n\in A$ & $\Leftrightarrow $ & $b\cdot b^{p_{n}}$ is a $1_{\inf }2_{\inf }$
permutation on $\omega $ \\
& $\Leftrightarrow $ & $\left( \exists h\in Sym_{\mathbf{0}}(\omega )\right)
[b\cdot b^{p_{n}}=f^{h}]$ \\
& $\Leftrightarrow $ & $\left( \exists u\in H(Sym_{\mathbf{0}}(\omega
))\right) [H(b)\cdot H(b)^{H(p_{n})}=H(f)^{u}],$%
\end{tabular}
\label{A}
\end{equation}

\noindent and,%
\begin{equation}
\begin{tabular}{lll}
$n\notin A$ & $\Leftrightarrow $ & $b\cdot b^{p_{n}}$ is a $1_{\inf }2_{%
\text{fin}}$ permutation on $\omega $ \\
& $\Leftrightarrow $ & $b\cdot b^{p_{n}}=\prod\limits_{(i,j)\in F}\left(
\tau ^{z^{i}}\right) ^{\tau ^{z^{j}}}$ \\
& $\Leftrightarrow $ & $H(b)\cdot H(b)^{H(p_{n})}=\prod\limits_{(i,j)\in
F}\left( H(\tau )^{H(z)^{i}}\right) ^{H(\tau )^{H(z)^{j}}}.$ $\ \ \ \ \ \ \
\ $%
\end{tabular}
\label{B}
\end{equation}

\noindent The set $F$ in the last equality in (\ref{B}) denotes some finite
set of pairwise disjoint cycles. For the map $H:Sym_{\mathbf{0}}(\omega
)\hookrightarrow G,$ note that $H(p_{n})=H(w)^{-n}\cdot H(p_{0})\cdot
H(w)^{n}.$

We claim that the last equivalence in (\ref{A}) can be strengthened so that
we have:%
\begin{equation}
\begin{tabular}{lll}
$n\in A$ & $\Leftrightarrow $ & $\left( \exists u\in G\right) [H(b)\cdot
H(b)^{H(p_{n})}=H(f)^{u}].$%
\end{tabular}
\label{C}
\end{equation}

\noindent To prove ($\Rightarrow $) in \ref{C}), assume that $n\in A.$ Then
\begin{eqnarray*}
\left( \exists u\in H(Sym_{0}(\omega ))\right) [H(b)\cdot H(b)^{H(p_{n})}
&=&H(f)^{u}],\text{ hence} \\
\left( \exists u\in G\right) [H(b)\cdot H(b)^{H(p_{n})} &=&H(f)^{u}].
\end{eqnarray*}

\noindent We will prove ($\Longleftarrow $) by contradiction. Assume that
for some fixed $u\in G$ we have
\begin{equation*}
H(b)\cdot H(b)^{H(p_{n})}=H(f)^{u}\text{, but }n\notin A.
\end{equation*}%
Then, by (\ref{B}), we have the following: \smallskip

(i)\ $b\cdot b^{p_{n}}$ is a $1_{\inf }2_{\text{fin}}$ permutation on $%
\omega ,$ \smallskip

(ii)\ $H(b)\cdot H(b)^{H(p_{n})}$ is the image of the $1_{\inf }2_{\text{fin}%
} $ permutation $b\cdot b^{p_{n}},$ while \smallskip

(iii)\ $H(f)$ is the image of the $1_{\inf }2_{\inf }$ permutation $f.$%
\smallskip

\noindent This contradicts our assumption that the image under $H$ of the $%
1_{\inf }2_{\text{fin}}$ permutation $b\cdot b^{p_{n}}$ cannot be the
conjugate of the image of any $1_{\inf }2_{\inf }$ permutation, including $%
f. $

We will now show that $A$ is computable in the group $G$, and hence $A\leq
_{T}X.$ For a given $n\in \omega ,$ simultaneosly search for a finite set $F$
of pairwise disjoint cycles and an $u\in G$ such that either the last
equality in (\ref{B}) holds:$\smallskip $

\begin{equation*}
H(b)\cdot H(b)^{H(p_{n})}=\prod\limits_{(i,j)\in F}\left( H(\tau
)^{H(z)^{i}}\right) ^{H(\tau )^{H(z)^{j}}},
\end{equation*}%
$\smallskip $\noindent or the following equality from (\ref{C}) holds:$%
\smallskip $

\begin{equation*}
H(b)\cdot H(b)^{H(p_{n})}=H(f)^{u}.
\end{equation*}%
\noindent If the former search succeeds, then $n\notin A,$ while if the
latter search succeeds, then $n\in A.$
\end{proof}

We will now prove our main results about the degree spectra of automorphism
groups.

\begin{theorem}
The degree of the isomorphisms type of the group $GSL_{\mathbf{0}}$ is $%
\mathbf{0}^{\prime \prime }.$
\end{theorem}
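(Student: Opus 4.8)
The plan is to establish two matching bounds: that $\mathbf{0}^{\prime \prime }$ belongs to the degree spectrum of $GSL_{\mathbf{0}}$, and that $\mathbf{0}^{\prime \prime }$ is a lower bound for the degree of every copy of $GSL_{\mathbf{0}}$. The lower bound will come from Theorem \ref{GSL2}, and the upper bound from an explicit $\mathbf{0}^{\prime \prime }$-computable presentation. Together these show that $\mathbf{0}^{\prime \prime }$ is the least degree in the spectrum, which by definition is the degree of the isomorphism type.

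First I would fix the embedding $H\colon Sym_{\mathbf{0}}(\omega )\hookrightarrow GSL_{\mathbf{0}}$ sending a computable permutation $f$ to the linear transformation $\langle \widetilde{f},id\rangle $ determined on the fixed computable basis $\{e_{i}\}$ by $\widetilde{f}(e_{i})=e_{f(i)}$. Since $\widetilde{f\circ g}(e_{i})=e_{f(g(i))}=\widetilde{f}(\widetilde{g}(e_{i}))$ and $\widetilde{f}=\widetilde{f^{\prime }}$ forces $f=f^{\prime }$, the map $H$ is a group monomorphism; moreover $\widetilde{f}$ is computable whenever $f$ is, so the image lies in $GSL_{\mathbf{0}}$. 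Given an arbitrary $X$-computable copy $G\cong GSL_{\mathbf{0}}$ with isomorphism $\iota $ (of any complexity), the composite $\iota \circ H$ is an embedding of $Sym_{\mathbf{0}}(\omega )$ into $G$, to which I intend to apply Theorem \ref{GSL2}.

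The crux --- and the step I expect to be the main obstacle --- is verifying the conjugacy hypothesis of Theorem \ref{GSL2}. Here I would use the invariant $\operatorname{rank}(T-I)$ attached to $T=H(p)$. Because $(\widetilde{p}-I)(e_{i})=e_{p(i)}-e_{i}$, each fixed point of $p$ contributes $0$ to the image while each $2$-cycle $\{i,j\}$ contributes the one-dimensional span of $e_{j}-e_{i}$; hence $\operatorname{rank}(H(p)-I)$ equals the number of $2$-cycles of $p$, which is infinite for a $1_{\inf }2_{\inf }$ permutation and finite for a $1_{\inf }2_{\text{fin}}$ one. For every $g\in GSL_{\mathbf{0}}$ we have $gIg^{-1}=I$, so $gTg^{-1}-I=g(T-I)g^{-1}$, and since a semilinear bijection preserves the dimension of every subspace, the value $\operatorname{rank}(T-I)$ --- and in particular its finiteness or infiniteness --- is invariant under conjugation in $GSL_{\mathbf{0}}$. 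I stress that this rank argument sidesteps any eigenvalue computation and therefore works uniformly in every characteristic, including the delicate case $\operatorname{char}(F)=2$ where $+1=-1$. It follows that $H(p)$ for a $1_{\inf }2_{\inf }$ permutation is never conjugate in $GSL_{\mathbf{0}}$ to $H(q)$ for a $1_{\inf }2_{\text{fin}}$ permutation; transporting this along the isomorphism $\iota $ gives the same statement in $G$. Theorem \ref{GSL2} then yields $\mathbf{0}^{\prime \prime }\leq \deg (X)$, so every copy of $GSL_{\mathbf{0}}$ computes $\mathbf{0}^{\prime \prime }$.

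For the upper bound I would construct a $\mathbf{0}^{\prime \prime }$-computable copy by indexing the elements of $GSL_{\mathbf{0}}$ via codes for programs computing $\mu $ and $\sigma $. The predicates ``$\varphi _{e}$ is total'', ``$\langle \mu ,\sigma \rangle $ is a bijective semilinear transformation'', and ``$\varphi _{e}=\varphi _{e^{\prime }}$'' are each $\Pi _{2}^{0}$ and so decidable from $\mathbf{0}^{\prime \prime }$, while composition and inversion act uniformly computably on such indices. Using $\mathbf{0}^{\prime \prime }$ I would thin the index set to a transversal naming each element of $GSL_{\mathbf{0}}$ exactly once, and then read off the group operations by computing an index for the product or inverse and locating its representative in the transversal. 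The resulting presentation has a $\mathbf{0}^{\prime \prime }$-computable atomic diagram, placing $\mathbf{0}^{\prime \prime }$ in the degree spectrum. Combined with the lower bound, this shows $\mathbf{0}^{\prime \prime }$ is the least degree in the spectrum, completing the proof.
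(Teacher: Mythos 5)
Your proof is correct, and structurally it mirrors the paper's: the same embedding $H(p)=\langle \widetilde{p},id\rangle$ with $\widetilde{p}(e_{i})=e_{p(i)}$, the same reduction to Theorem \ref{GSL2}, and the same upper bound via an index-based presentation (your transversal construction supplies the detail that the paper compresses into ``clearly, we can construct a specific copy computable in $\emptyset ''$''). Where you genuinely diverge is in verifying the conjugacy obstruction, which is the real content of the proof. The paper's invariant is the property that $\{v-f_{1}(v):v\in V_{\infty }\}$ is contained in a finite-dimensional subspace --- extensionally the same as your ``$\mathrm{rank}(T-I)$ is finite'' --- but the paper proves its conjugation-invariance by a hands-on contradiction: given a conjugator $\langle h,\sigma \rangle$ and a putative finite-dimensional witness $W$ for $f^{h}$, it forms $W_{1}=h(W)$, takes a finite set $B_{1}$ of basis vectors supporting $W_{1}$, and chooses a $2$-cycle $(u_{1},v_{1})$ of $f$ with $u_{1},v_{1}\notin B_{1}$, so that $u_{1}-f(u_{1})=u_{1}-v_{1}$ escapes $W_{1}$; pulling back along $h^{-1}$ contradicts the choice of $W$. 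Your argument replaces all of this with the identity $gTg^{-1}-I=g(T-I)g^{-1}$ (valid because a semilinear $g$ is additive) together with the fact that semilinear bijections preserve dimensions of subspaces; this is shorter, makes the invariant quantitative ($\mathrm{rank}(H(p)-I)$ equals the number of $2$-cycles of $p$), and isolates exactly why semilinearity of the conjugator is harmless --- a point the paper handles only implicitly through its choice of $B_{1}$. Two small checks are worth stating explicitly in your version: (i) $T=H(p)$ and its conjugate $gTg^{-1}$ are genuinely linear, since the field-automorphism parts cancel, so both ranks are well-defined; (ii) the image of $g(T-I)g^{-1}$ is $g(\mathrm{im}(T-I))$ because $g^{-1}$ is onto. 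Both are one-line verifications, so your proof stands, and its key step is arguably cleaner than the paper's.
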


\begin{proof}
Let $V=\{v_{0},v_{1\text{ }},\ldots \}$ be a computable basis of $V_{\infty
}.$ Define
\begin{equation*}
H:Sym_{\mathbf{0}}(\omega )\hookrightarrow GSL_{\mathbf{0}}
\end{equation*}%
so that for any $p\in Sym_{\mathbf{0}}(\omega )$ the image $%
H(p)=\left\langle L,id\right\rangle $ is a semilinear map such that
\begin{equation*}
L(v_{i})=v_{p(i)}\text{ for every }i\in \omega .
\end{equation*}%
We claim that under $H$, the image of a $1_{\inf }2_{\inf }$ permutation
from $Sym_{\mathbf{0}}(\omega )$ cannot be a conjugate of the image of a $%
1_{\inf }2_{\text{fin}}$ permutation from $Sym_{\mathbf{0}}(\omega )$. To
establish this fact, suppose that $\left\langle f,id\right\rangle $, $%
\left\langle f_{1},id\right\rangle \in GSL_{\mathbf{0}}$ are the images of
some $1_{\inf }2_{\inf }$ and $1_{\inf }2_{\text{fin}}$ computable
permutations on $\omega $, respectively. Suppose that $\left\langle
f,id\right\rangle $ and $\left\langle f_{1},id\right\rangle $ are
conjugates, and let $\left\langle h,\sigma \right\rangle \in GSL_{\mathbf{0}%
} $ be such that $\left\langle f,id\right\rangle ^{\left\langle h,\sigma
\right\rangle }=\left\langle f_{1},id\right\rangle .$ Note that the map $%
h:V_{\infty }\rightarrow V_{\infty }$ is $1-1$ and onto $V_{\infty }.$ To
simplify the notation, we will refer to the semilinear maps $\left\langle
f,id\right\rangle $, $\left\langle f_{1},id\right\rangle $, and $%
\left\langle h,\sigma \right\rangle $ simply as $f,f_{1}$, and $h,$
respectively.

We can view $f\upharpoonright V$ and $f_{1}\upharpoonright V$ as $1_{\inf
}2_{\inf }$ and $1_{\inf }2_{\text{fin}}$ permutations on $V,$ respectively.
We will prove that $f_{1}$ satisfies the property:
\begin{equation}
\left( \exists W\subset _{fin}V_{\infty }\right) \left( \forall v\in
V_{\infty }\right) [\left( v-f_{1}(v)\right) \in W].  \label{D}
\end{equation}%
\smallskip where, $W\subset _{fin}V_{\infty }$ stands for $W$ being a
finite-dimensional subspace of $V_{\infty }.$ To prove (\ref{D}), assume
that $B=\left\{ x_{1},\ldots ,x_{k},y_{1},\ldots ,y_{k}\right\} \subseteq V$
is such that $f_{1}\upharpoonright V=$ $\prod\limits_{i\leq k}\left(
x_{i},y_{i}\right) .$ Note that for every $v\in V_{\infty }$, there are $%
v_{1}\in span(V-B)$ and $v_{2}\in span(B)$ such that $v=v_{1}+v_{2}.$ Then
\begin{equation*}
f_{1}(v)=f_{1}(v_{1})+f_{1}(v_{2})=v_{1}+f_{1}(v_{2}),
\end{equation*}%
and so
\begin{equation*}
v-f_{1}(v)=v_{1}+v_{2}-v_{1}-f_{1}(v_{2})=v_{2}-f_{1}(v_{2})\in span(B)
\end{equation*}%
since $f_{1}(v_{2})\in span(B).$ Therefore, $W=span(B)$ is a
finite-dimensional subspace of $V_{\infty }$ for which property $(D)$ holds.

We will now prove that $f^{h}$ does not satisfy property (\ref{D}), which
will contradict the assumption that $f^{h}=f_{1}$. Thus, assume that $W$ is
a finite-dimensional subspace of $V_{\infty }$ such that
\begin{equation}
\left( \forall x\in V_{\infty }\right) [\left( x-f^{h}(x)\right) \in W].
\label{E}
\end{equation}%
Let $W_{1}=h(W)$ and note that $W_{1}$ is finite-dimensional$.$ Let $B_{1}$
be a finite subset of the basis $V$ such that%
\begin{equation*}
\left( \forall x\in W_{1}\right) [supp_{V}(x)\subseteq B_{1}],
\end{equation*}%
where $supp_{V}(x)$ denotes the support of $x$ with respect to the basis $V.$

We will now find $u_{1}\in V_{\infty }$ such that $u_{1}-f(u_{1})\notin
W_{1} $. Since $f\upharpoonright V$ is a $1_{\inf }2_{\inf }$ permutation on
$V,$ there are infinitely many pairs $\left( u,v\right) \in V\times V$ such
that
\begin{equation}
u\neq v,\text{ }f(u)=v\text{ and }f(v)=u.  \label{F}
\end{equation}%
Since $B_{1}$ is finite, we can also find $u_{1},v_{1}\in V-B_{1}$, which
have property (\ref{F}). Then:

(i) \ $u_{1}-f(u_{1})=u_{1}-v_{1}\neq 0,$ and

(ii) \ $u_{1}-f(u_{1})=\left( u_{1}-v_{1}\right) \notin span(B_{1})$ because
$B_{1}\cup \{u_{1},v_{1}\}\subseteq V$.

\noindent Since $W_{1}\subseteq span(B_{1})$, we have $u_{1}-f(u_{1})\notin
W_{1}.$

\noindent Therefore, $\left( h^{-1}(u_{1})-h^{-1}(f(u_{1}))\right) \notin
h^{-1}(W_{1})$,

\noindent and so $\left( h^{-1}(u_{1})-h^{-1}fhh^{-1}(u_{1})\right) \notin W.
$

\noindent If we let $x_{1}=h^{-1}(u_{1})$, we obtain
\begin{equation*}
x_{1}-f^{h}(x_{1})\notin W,
\end{equation*}%
which contradicts (\ref{E}).\smallskip

Thus, we constructed an embedding $H:Sym_{\mathbf{0}}(\omega
)\hookrightarrow GSL_{\mathbf{0}}$ such that the images of any $1_{\inf
}2_{\inf }$ and $1_{\inf }2_{\text{fin}}$ permutations from $Sym_{\mathbf{0}%
}(\omega )$ cannot be conjugates in $GSL_{\mathbf{0}}$. By Theorem \ref{GSL2}%
, we conclude that $\mathbf{\emptyset }^{\prime \prime }$ is computable in
any copy of $GSL_{\mathbf{0}}.$ Clearly, we can construct a specific copy of
$GSL_{\mathbf{0}}$, which is computable in $\mathbf{\emptyset }^{\prime
\prime }.$ Therefore, the degree of the isomorphisms type of $GSL_{\mathbf{0}%
}$ is $\mathbf{0}^{\prime \prime }.$
\end{proof}

We will now establish a similar result for the Boolean algebra $B_{\eta }.$
Recall that we identify the elements of $B_{\eta }$ with finite unions and
intersections of left-closed right-open intervals of the linear order of the
set $\mathbb{Q}$ of rationals.

\begin{theorem}
The degree of the isomorphisms type of the group $Aut_{\mathbf{0}}(B_{\eta
}) $ is $\mathbf{0}^{\prime \prime }.$
\end{theorem}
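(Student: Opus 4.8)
The plan is to mirror the structure of the preceding theorem about $GSL_{\mathbf{0}}$, applying Theorem \ref{GSL2} to the group $G = Aut_{\mathbf{0}}(B_{\eta})$ with the embedding $H: Sym_{\mathbf{0}}(\omega) \hookrightarrow Aut_{\mathbf{0}}(B_{\eta})$ already constructed in the proof of Theorem \ref{Main}(b). Recall that $H(p)$ acts on $B_{\eta}$ via the map $\widetilde{p}$ on $\mathbb{Q}$ that fixes the negative rationals and sends $x \mapsto p([x]) + \{x\}$ for $x \geq 0$. The decisive structural observation is that this $H$ assigns to each natural number $n$ the half-open \emph{unit block} $b_n =_{def} [n, n+1)$, and $H(p)$ simply permutes these blocks according to $p$ while fixing the negative part pointwise. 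Thus a $1_{\inf}2_{\inf}$ or $1_{\inf}2_{\text{fin}}$ permutation $p$ gives rise to an automorphism $H(p)$ that swaps the corresponding blocks $b_i \leftrightarrow b_j$ (for each transposition $(i,j)$ of $p$) and fixes all other blocks.

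Following the template of the $GSL_{\mathbf{0}}$ proof, the main task is to verify the hypothesis of Theorem \ref{GSL2}: that the image under $H$ of a $1_{\inf}2_{\inf}$ permutation cannot be a conjugate in $Aut_{\mathbf{0}}(B_{\eta})$ of the image of a $1_{\inf}2_{\text{fin}}$ permutation. First I would isolate a conjugacy-invariant that distinguishes the two cases. The natural candidate is the analogue of property $(D)$: I would say an automorphism $\Theta$ of $B_{\eta}$ has \emph{finite displacement} if there is a finitely generated subalgebra $W \leq B_{\eta}$ with $(\forall a \in B_{\eta})[\,a \triangle \Theta(a) \in W\,]$, where $\triangle$ is symmetric difference. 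When $p$ is $1_{\inf}2_{\text{fin}}$, only finitely many blocks are swapped, so $H(p)$ moves only elements whose symmetric difference with their image lands in the finite subalgebra generated by the finitely many affected blocks — hence $H(p)$ has finite displacement. Conversely, when $q$ is $1_{\inf}2_{\inf}$, the automorphism $H(q)$ swaps infinitely many disjoint blocks, so for any finitely generated (hence atomic-on-cofinitely-many-blocks) $W$ one can find a block $b_i$ with $b_i \triangle H(q)(b_i) = b_i \cup b_{q(i)} \notin W$; thus $H(q)$ fails finite displacement.

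The key step, and the main obstacle, is then to show that finite displacement is preserved under conjugation in the full group $Aut_{\mathbf{0}}(B_{\eta})$ — not merely under conjugation by images of permutations. Concretely, if $\Theta$ has finite displacement witnessed by $W$ and $\Xi \in Aut_{\mathbf{0}}(B_{\eta})$ is arbitrary, I must produce a finitely generated $W'$ witnessing finite displacement for $\Theta^{\Xi} = \Xi^{-1} \Theta \Xi$. The natural guess is $W' = \Xi^{-1}(W)$, since for any $a$ one computes $a \triangle \Theta^{\Xi}(a) = \Xi^{-1}\bigl(\Xi(a) \triangle \Theta(\Xi(a))\bigr) \in \Xi^{-1}(W)$, using that $\Xi^{-1}$ is a Boolean homomorphism and so commutes with $\triangle$. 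Because $\Xi^{-1}$ is an automorphism, $\Xi^{-1}(W)$ is again a finitely generated subalgebra, so $W'$ is a valid witness. This establishes conjugacy-invariance of finite displacement and therefore the required hypothesis.

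Having verified the hypothesis, Theorem \ref{GSL2} (with $X$ a presentation of $Aut_{\mathbf{0}}(B_{\eta})$) yields $\mathbf{0}^{\prime\prime} \leq \deg(X)$ for every copy of the group, so $\mathbf{0}^{\prime\prime}$ lies below every degree in the spectrum. For the matching upper bound I would exhibit an explicit $\mathbf{0}^{\prime\prime}$-computable copy of $Aut_{\mathbf{0}}(B_{\eta})$: since a computable automorphism of $B_{\eta}$ is determined by its action on the countably many generating intervals and the predicate ``$\Phi_e$ is a total automorphism of $B_{\eta}$'' together with the equality and composition relations on such indices are arithmetical at the $\Pi^0_2 / \Sigma^0_2$ level, one can present the group using $\mathbf{0}^{\prime\prime}$ as an oracle. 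Combining the two bounds gives that the degree of the isomorphism type of $Aut_{\mathbf{0}}(B_{\eta})$ is exactly $\mathbf{0}^{\prime\prime}$.
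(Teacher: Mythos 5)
Your overall strategy matches the paper's: apply Theorem \ref{GSL2} to $G=Aut_{\mathbf{0}}(B_{\eta })$ via the embedding $H$ from the proof of Theorem \ref{Main}(b), verify that images of $1_{\inf }2_{\inf }$ and $1_{\inf }2_{\text{fin}}$ permutations are never conjugate, and finish with a $\mathbf{0}^{\prime \prime }$-computable copy. Your conjugation-invariance computation ($W'=\Xi ^{-1}(W)$, using that automorphisms commute with symmetric difference) is correct, as is the upper bound. But the invariant you chose does not do the separating work, and this is a genuine gap. A finitely generated subalgebra of a Boolean algebra is \emph{finite}, whereas the displacement set $\{a\bigtriangleup H(p)(a):a\in B_{\eta }\}$ is infinite as soon as $p$ contains even one $2$-cycle $(i\;j)$: taking $a=[i,i+q)$ for rational $q\in (0,1)$ produces the pairwise distinct elements $[i,i+q)\cup \lbrack j,j+q)$, none of which belongs to the finite subalgebra generated by the blocks $[i,i+1)$ and $[j,j+1)$ --- being a \emph{subset} of the union of the affected blocks is not the same as being an \emph{element} of the subalgebra they generate. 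So no $H(p)$ with $p\neq id$ has ``finite displacement,'' and your property fails on both sides rather than separating them. The natural repair --- requiring only that all displacements lie below a fixed element $u\in B_{\eta }$, i.e.\ in a principal ideal, which is the true analogue of the finite-dimensional subspace $W$ in the $GSL_{\mathbf{0}}$ proof --- is conjugation-invariant but separates nothing either: since every $H(p)$ fixes the negative rationals pointwise, $u=[0,\infty )\in B_{\eta }$ bounds the displacement of \emph{every} $H(p)$, $1_{\inf }2_{\inf }$ or not. This is precisely where the Boolean algebra case diverges from the vector space case: in $V_{\infty }$ the differences $v-f_{1}(v)$ span a finite-dimensional subspace and ``finite-dimensional'' is the right smallness notion, while in $B_{\eta }$ the support of $H(p)$ is a union of blocks that is always bounded above; the relevant question is whether that union is itself an element of the algebra.

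The paper's invariant is built to capture exactly that. It defines $\Phi (x,\varphi )\equiv (\forall y\leq x)[y\neq 0\Rightarrow (\exists z\leq y)(\varphi (z)\neq z)]$ and lets $\Psi (\varphi )$ say that $\{x:\Phi (x,\varphi )\}$ has a supremum in $B_{\eta }$. For $\varphi =H(p)$ this set consists of the elements of $B_{\eta }$ contained in the union $A$ of the moved blocks; when $p$ is $1_{\inf }2_{\text{fin}}$ the supremum exists (it is $A$ itself, a finite union of blocks), while when $p$ is $1_{\inf }2_{\inf }$ the set $A$ is an infinite, coinfinite union of blocks, is not an element of $B_{\eta }$, and any upper bound can be strictly shrunk by deleting a fixed block it contains, so no supremum exists. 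Since $\Psi $ is expressed in the language of Boolean algebras (with $\sup$ definable) augmented by a function symbol for $\varphi $, an induction on formulas shows it is preserved when $\varphi $ is replaced by $\varphi ^{f}$ for any $f\in Aut(B_{\eta })$ --- this induction plays the role your $\Xi ^{-1}(W)$ computation was meant to play --- and that yields the required non-conjugacy. If you replace ``finite displacement'' by this supremum-based property (or any correct separating conjugacy invariant), the rest of your argument goes through as written.
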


\begin{proof}
The map $H:Sym(\omega )\hookrightarrow Aut(B_{\eta })$ was defined in the
proof of part (b) of Theorem \ref{Main} in section 1. Since $H(p)$ is
uniformly computable in $p$, where $p\in Sym(\omega ),$ $H$ takes elements
of $Sym_{\mathbf{0}}(\omega )$ to elements of $Aut_{\mathbf{0}}(B_{\eta }).$
In the remainder of the proof we will be concerned with the restriction of $%
H $ to $Sym_{\mathbf{0}}(\omega ).$ We claim that under $H$, the image of a $%
1_{\inf }2_{\inf }$ permutation from $Sym_{\mathbf{0}}(\omega )$ cannot be a
conjugate of the image of a $1_{\inf }2_{\text{fin}}$ permutation from $Sym_{%
\mathbf{0}}(\omega )$.

Let $\Phi (x,\varphi )$ denote the statement%
\begin{equation*}
\left( \forall y\leq x\right) [y\neq 0\Rightarrow \left( \exists z\leq
y\right) (\varphi (z)\neq z)],
\end{equation*}%
and let $\Psi (\varphi )$ denote the statement
\begin{equation*}
\left( \exists z\right) [z=sup\{x:\Phi (x,\varphi )\}].
\end{equation*}%
The statement $\Phi $ is in the language of Boolean algebras expanded with a
function symbol $\varphi $ with an intended interpretation being an element
of $Aut_{\mathbf{0}}(B_{\eta }).$ Note that $sup$ can be defined in the
language of Boolean algebras.

Recall that in the proof of part (b) of Theorem \ref{Main}, for $p\in Sym_{%
\mathbf{0}}(\omega ),$ we defined $\widetilde{p}:\mathbb{Q\rightarrow Q}$ as
follows:%
\begin{equation*}
\widetilde{p}(x)=\left\{
\begin{array}{cc}
x & \text{if }x\ \text{is negative,} \\
p([x])+\{x\} & \text{if }x\text{ is non-negative}.%
\end{array}%
\right.
\end{equation*}

If $\varphi =H(p),$ where $p$ is a $1_{\inf }2_{fin}$ permutation on $\omega
$, then the set $A=\{x\in \mathbb{Q}:\widetilde{p}(x)\neq x\}$ is the union
of finitely many intervals of the type $[n,n+1)$ for a natural number $n$.
If $u$ is the join of these intervals in $B_{\eta }$, then $u$ is the
largest element that satisfies $\Phi (u,\varphi )$. In this case $\Psi
(\varphi )$.

If $\varphi =H(p),$ where $p$ is a $1_{\inf }2_{\inf }$ permutation on $%
\omega $, then the set $A=\{x\in \mathbb{Q}:\widetilde{p}(x)\neq x\}$ is the
union of intervals of the type $[n,n+1)$ for a natural number $n$. In this
case $\overline{A}$ is also the union of infinitely such intervals. The
formula $\Phi (u,\varphi )$ is satisfied exactly by those $u$ that are the
unions of finitely many intervals that consist entirely of elements of $A$.
Clearly, $\{u:\Phi (u,\varphi )\}$ has no supremum in this case and so $%
\urcorner \Psi (\varphi ).$

Thus, we have established that:

(1) $\Psi (\varphi )$ holds when $\varphi $ is the image, under $H,$ of an $%
1_{\inf }2_{\text{fin}}$ permutation,

(2) $\urcorner \Psi (\varphi )$ holds when $\varphi $ is the image, under $%
H, $ of an $1_{\inf }2_{\inf }$ permutation.

We will now prove that for any formula $\Theta $ in the language of Boolean
algebras, expanded with function symbols for the elements of $Aut(B_{\eta })$
and quantifiers ranging over the elements of a Boolean algebra, we have the
following for every function $f\in Aut(B_{\eta })$:%
\begin{equation*}
\Theta (\overrightarrow{x},\overrightarrow{\theta })\Leftrightarrow \Theta
(f(\overrightarrow{x}),\overrightarrow{\theta }^{f}),\text{ }
\end{equation*}%
where $\overrightarrow{\theta }=\theta _{1},\ldots ,\theta _{n}$ and $%
\overrightarrow{\theta }^{f}=\theta _{1}^{f},\ldots ,\theta _{n}^{f}$ are
new function symbols (with the usual intended interpretation). In our
notation $\Theta (\overrightarrow{x},\overrightarrow{\theta })$, $%
\overrightarrow{x}$ are the free variables and $\overrightarrow{\theta }$
are the new function symbols used in $\Theta $.

We will proceed by induction on the complexity of the formula $\Phi $.

\noindent If $\Theta (\overrightarrow{x},\overrightarrow{\theta })$ is
quantifier-free, we can consider the following atomic formulas:

$x=y,$ $x=0,$ $x=1,$ $\theta (x)=y,$ $x\wedge y=z,$ $x\vee y=z,$ $\overline{x%
}=y$. We will present only the case when $\Theta $ is the formula $\theta
(x)=y.$ Let $f\in Aut(B_{\eta }).$ Then we have:%
\begin{equation*}
\theta (x)=y\Leftrightarrow f\theta (x)=f(y)\Leftrightarrow f\theta
f^{-1}f(x)=f(y)\Leftrightarrow \theta ^{f}(f(x))=f(y).
\end{equation*}

Let $\Theta $ be the formula $\exists z\Theta _{1}(z,\overrightarrow{x},%
\overrightarrow{\theta })$ and let $f\in Aut(B_{\eta }).$

We have that $\exists z\Theta _{1}(z,\overrightarrow{x},\overrightarrow{%
\theta })\Leftrightarrow \Theta _{1}(c,\overrightarrow{x},\overrightarrow{%
\theta })$ for some $c\in B_{\eta }.$ Then, using the inductive hypothesis,%
\begin{eqnarray*}
\Theta _{1}(c,\overrightarrow{x},\overrightarrow{\theta }) &\Leftrightarrow
&\Theta _{1}(f(c),f(\overrightarrow{x}),\overrightarrow{\theta }^{f}) \\
&\Leftrightarrow &\exists z\Theta _{1}(z,f(\overrightarrow{x}),%
\overrightarrow{\theta }^{f}).
\end{eqnarray*}%
This completes the induction.

By the previous considerations, we have%
\begin{equation*}
\left( \forall f\in Aut(B_{\eta })\right) [\Psi (\varphi )\Leftrightarrow
\Psi (f^{-1}\varphi f)].
\end{equation*}%
It follows from (1) and (2) above that, under $H,$ the image of any $1_{\inf
}2_{\inf }$ permutation from $Sym_{\mathbf{0}}(\omega )$ cannot be the
conjugate of the image of any $1_{\inf }2_{\text{fin}}$ permutation from $%
Sym_{\mathbf{0}}(\omega ).$

By Theorem \ref{GSL2}, we conclude that $\mathbf{\emptyset }^{\prime \prime
} $ is computable in any copy of $Aut_{\mathbf{0}}(B_{\eta }).$ Clearly, we
can construct a specific copy of $Aut_{\mathbf{0}}(B_{\eta })$, which is
computable in $\mathbf{\emptyset }^{\prime \prime }.$ Therefore, the degree
of the isomorphisms type of $Aut_{\mathbf{0}}(B_{\eta })$ is $\mathbf{0}%
^{\prime \prime }.$
\end{proof}

Note that the results of the previous theorems in this section can be easily
relativized to any Turing degree $\mathbf{d}$.

\begin{theorem}
\label{Main2}The degree of the isomorphisms type of each of the groups $GSL_{%
\mathbf{d}}$ and $Aut_{\mathbf{d}}(B_{\eta })$ is $\mathbf{d}^{\prime \prime
}.$
\end{theorem}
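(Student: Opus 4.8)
The plan is to obtain Theorem \ref{Main2} by relativizing the two preceding unrelativized theorems (the degree-of-isomorphism-type results for $GSL_{\mathbf{0}}$ and $Aut_{\mathbf{0}}(B_{\eta})$) from the base degree $\mathbf{0}$ to an arbitrary Turing degree $\mathbf{d}$. The target in each case is to show that the Turing degree of the isomorphism type is exactly $\mathbf{d}^{\prime\prime}$, which splits into a lower bound ($\mathbf{d}^{\prime\prime}$ computes in every copy) and an upper bound (some copy is computable in $\mathbf{d}^{\prime\prime}$).

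For the upper bound, I would note that both $GSL_{\mathbf{d}}$ and $Aut_{\mathbf{d}}(B_{\eta})$ admit a presentation in which the group elements are coded by $\mathbf{d}$-computable objects (semilinear maps with $\mathbf{d}$-computable $\mu,\sigma$, respectively $\mathbf{d}$-computable automorphisms of $B_{\eta}$); the group operations and the equality relation on such codes are decidable using oracle $\mathbf{d}^{\prime\prime}$, exactly as in the base case, so a copy computable in $\mathbf{d}^{\prime\prime}$ can be built by the same enumeration argument used at the end of the two preceding proofs.

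For the lower bound, the key is to relativize Theorem \ref{GSL2} and its two applications. The relativized embeddability theorem reads: if $G$ is $X$-computable and $H:Sym_{\mathbf{d}}(\omega)\hookrightarrow G$ is an embedding under which no image of a $1_{\inf}2_{\inf}$ permutation is a conjugate of the image of a $1_{\inf}2_{\text{fin}}$ permutation, then $\mathbf{d}^{\prime\prime}\leq\deg(X)$. The proof is verbatim that of Theorem \ref{GSL2} with $\Pi_2^0$ replaced by $\Pi_2^0(\mathbf{d})$, the computable predicate $R$ replaced by a $\mathbf{d}$-computable one, and all permutations $w,p_0,z,\tau,b,f$ taken in $Sym_{\mathbf{d}}(\omega)$ rather than $Sym_{\mathbf{0}}(\omega)$; the combinatorial construction of $b$ and the conjugacy characterization of membership in the $\Pi_2^0(\mathbf{d})$-complete set $A$ go through unchanged, yielding $A\leq_T X$ and hence $\mathbf{d}^{\prime\prime}\leq\deg(X)$. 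I would then re-run the two structural arguments in the relativized setting: the embedding $H:Sym_{\mathbf{d}}(\omega)\hookrightarrow GSL_{\mathbf{d}}$ sending $p$ to the semilinear map permuting a fixed $\mathbf{d}$-computable basis still has the property that property (\ref{D}) distinguishes images of $1_{\inf}2_{\text{fin}}$ permutations from images of $1_{\inf}2_{\inf}$ permutations (the finite-support/finite-dimensional argument does not refer to the complexity of the maps), and the Boolean-algebra embedding $H:Sym_{\mathbf{d}}(\omega)\hookrightarrow Aut_{\mathbf{d}}(B_{\eta})$ still separates the two permutation types via the first-order formula $\Psi(\varphi)$, whose conjugacy-invariance was proved by a syntactic induction independent of $\mathbf{d}$.

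The main obstacle, such as it is, lies in confirming that nothing in the two separation arguments secretly used $\mathbf{0}$-computability in an essential way: the conjugator $h$ in the $GSL$ argument and the witness $f$ in both arguments must be available in the relativized groups, and the formula $\Psi$ must remain conjugacy-invariant over $Aut(B_{\eta})$ rather than only over $Aut_{\mathbf{0}}(B_{\eta})$. Since the fixed $1_{\inf}2_{\inf}$ permutation $f$ and the generators $\tau,z$ can be chosen $\mathbf{d}$-computable, and since $\Psi$ was shown invariant under conjugation by \emph{every} element of $Aut(B_{\eta})$, these requirements are met; thus the relativization is routine and the degree of the isomorphism type of each group is $\mathbf{d}^{\prime\prime}$.
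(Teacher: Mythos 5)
Your proposal is correct and takes exactly the paper's approach: the paper offers no separate argument for Theorem \ref{Main2}, saying only that the preceding two theorems ``can be easily relativized to any Turing degree $\mathbf{d}$,'' which is precisely the relativization you describe. In fact your sketch supplies more detail than the paper does --- the relativized statement of Theorem \ref{GSL2}, the check that both separation arguments (the finite-dimensional-support property for $GSL$ and the conjugation-invariant formula $\Psi$ for $B_{\eta}$) are independent of the base degree, and the $\mathbf{d}^{\prime\prime}$-computable copy for the upper bound.
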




\begin{thebibliography}{99}
\bibitem{DHM1} R. Dimitrov, V.Harizanov, and A. Morozov, Automorphism groups
of substructure lattices of vector spaces in computable algebra, in: 12\emph{%
th Conference on Computability in Europe}, \emph{Lecture Notes in Computer
Science} 9709, A. Beckmann, L. Bienvenu, and N. Jonoska, eds., Springer
(2016), pp. 251--260.

\bibitem{DHM} R. Dimitrov, V. Harizanov, and A.S. Morozov, Dependence
relations in computably rigid computable vector spaces, \emph{Annals of Pure
and Applied Logic} 132, pp. 97--108 (2005).

\bibitem{DHKLMM} R. Downey, D.R. Hirschfeldt, A.M. Kach, S. Lempp, J.R.
Mileti, and A. Montalb\'{a}n, \emph{Journal of Algebra} 314 (2007), pp.
888--894.

\bibitem{DR} R.G. Downey and J.B.~Remmel, Computable algebras and closure
systems: coding properties, in: Yu.L. Ershov, S.S.~Goncharov, A. Nerode, and
J.B.~Remmel, eds., \emph{Handbook of Recursive Mathematics}, vol. 2, Studies
in Logic and the Foundations of Mathematics 139, pp. 997--1039.
North-Holland, Amsterdam, (1998).

\bibitem{EG} Yu.L. Ershov and S.S. Goncharov, \emph{Constructive Models},
Siberian School of Algebra and Logic, (English translation). Kluwer
Academic/Plenum Publishers, (2000).

\bibitem{FHM} E. Fokina, V. Harizanov, and A. Melnikov, Computable model
theory, \emph{Turing's Legacy: Developments from Turing Ideas in Logic}, R.
Downey, editor, pp. 124--194. Cambridge University Press/ASL (2014).

\bibitem{GHKMR} S. Goncharov, V. Harizanov, J. Knight, A. Morozov, and A.
Romina, On automorphic tuples of elements in computable models, \emph{%
Siberian Mathematical Journal} 46, (English translation), 405--412 (2005).

\bibitem{Gu} D.R. Guichard, Automorphisms of substructure lattices in
recursive algebra, \emph{Annals of Pure and Applied Logic }25, 47--58 (1983).

\bibitem{HM} V. Harizanov and R. Miller, Spectra of structures and
relations, \emph{Journal of Symbolic Logic} 72, 324--348 (2007).

\bibitem{JK} J.F. Knight, Degrees coded in jumps of orderings, \emph{Journal
of Symbolic Logic} 51, 1034--1042 (1986).

\bibitem{MN} G. Metakides and A. Nerode, Recursively enumerable vector spaces%
\textit{,} \emph{Annals of Pure and Applied Logic} 11, 147--171 (1977).

\bibitem{Morozov1} A.S. Morozov, Permutations and implicit definability,
\emph{Algebra and Logic} 27 (English translation), 12--24 (1988).

\bibitem{Morozov2} A.S. Morozov, Turing reducibility as algebraic
embeddability, \emph{Siberian Mathematical Journal} 38 (English
translation), 312--313 (1997).

\bibitem{Morozov3} A.S. Morozov, Groups of computable automorphisms, in:
Yu.L. Ershov, S.S. Goncharov, A. Nerode, and J.B. Remmel, eds., \emph{%
Handbook of Recursive Mathematics}, vol.\ 1, Studies in Logic and the
Foundations of Mathematics 139, pp. 311--345. North-Holland, Amsterdam,
(1998).

\bibitem{Morozov4} A.S. Morozov, On theories of classes of groups of
recursive permutations, \emph{Trudy Instituta Matematiki} (Novosibirsk) 12
(1989), Mat. Logika i Algoritm. Probl., pp. 91--104 (Russian). (English
translation in:\emph{\ Siberian Advances in Mathematics} 1, 138--153 (1991).

\bibitem{Morozov6} A.S. Morozov, Computable groups of automorphisms of
models, \emph{Algebra and Logic} 25 (English translation), 261--266 (1986).

\bibitem{Richter} L.J.\ Richter, \emph{Degrees of Unsolvability of Models},
Ph.D. dissertation, University of Illinois at Urbana-Champaign, (1977).

\bibitem{R81} L.J.\ Richter, Degrees of structures, \emph{Journal of
Symbolic Logic} 46, 723--731 (1981).

\bibitem{Ro} H. Rogers, Jr., \emph{Theory of Recursive Functions and
Effective Computability}. McGraw-Hill, New York, (1967).

\bibitem{So} R.I. Soare, \emph{Recursively Enumerable Sets and Degrees. }%
Springer-Verlag, Berlin, (1987).
\end{thebibliography}
\end{document}